




\documentclass[11pt]{amsart}


\usepackage{amsmath, amsthm}
\usepackage{eucal}

\usepackage{amssymb}
\usepackage{amscd}
\usepackage{palatino}
\usepackage{latexsym}
\usepackage{epsfig}
\usepackage{graphicx}
\usepackage{amsfonts}
\usepackage{psfrag}
\usepackage{caption}

\usepackage{url}
\usepackage{cite}

\usepackage[margin=1in]{geometry}


\input xy
\xyoption{all}
\UseComputerModernTips


\oddsidemargin=0pt
\evensidemargin=0pt
\topmargin=0in
\setlength{\textheight}{8.5in}
\setlength{\textwidth}{6.5in}

\numberwithin{equation}{section}
\numberwithin{figure}{section}


\newtheorem{theorem}{Theorem}[section]

\newtheorem{lemma}[theorem]{Lemma}
\newtheorem{proposition}[theorem]{Proposition}
\newtheorem{corollary}[theorem]{Corollary}

\theoremstyle{definition}
\newtheorem{definition}[theorem]{Definition}
\newtheorem{remark}[theorem]{Remark}
\newtheorem{example}[theorem]{Example}


\newcommand{\C}{{\mathbb{C}}}

\newcommand{\Z}{{\mathbb{Z}}}
\newcommand{\Q}{{\mathbb{Q}}}
\newcommand{\R}{{\mathbb{R}}}

\renewcommand{\P}{{\mathbb{P}}}

\renewcommand{\t}{\mathfrak{t}}
\newcommand{\su}{\mathfrak{su}}

\newcommand{\h}{\mathfrak{h}}

\newcommand{\into}{\hookrightarrow}

\renewcommand{\mod}{\mathbin{/\!/}}

\DeclareMathOperator{\Fred}{Fred}
\DeclareMathOperator{\Stab}{Stab}
\DeclareMathOperator{\Lie}{Lie}

\DeclareMathOperator{\codim}{codim}

\DeclareMathOperator{\Crit}{Crit}

\DeclareMathOperator{\pt}{pt}

\DeclareMathOperator{\id}{id}

%
%

\newcommand{\hsm}{{\hspace{1mm}}}

%
%

\setlength{\marginparwidth}{0.7in}

%
%

\newcommand{\okf}{\mathbb{K}_{\mathrm{orb}}}
\DeclareMathOperator{\Hess}{Hess}




\begin{document}

\title[Torsion in the full orbifold $K$-theory of abelian symplectic quotients]{Torsion in the full orbifold $K$-theory\\ of abelian symplectic quotients}

\author{Rebecca Goldin}
\address{Mathematical Sciences MS 3F2, George Mason University, 4400 University Drive, Fairfax, Virginia 22030,
USA}
\email{rgoldin@math.gmu.edu}
\urladdr{\url{http://math.gmu.edu/~rgoldin/}}

\author{Megumi Harada}
\address{Department of Mathematics and
Statistics\\ McMaster University\\ 1280 Main Street West\\ Hamilton, Ontario L8S4K1\\ Canada}
\email{Megumi.Harada@math.mcmaster.ca}
\urladdr{\url{http://www.math.mcmaster.ca/Megumi.Harada/}}
\thanks{The first author  was partially supported by NSF-DMS Grant \#0606869 and by a George Mason University Provost's Seed Grant. The second author was partially supported by an NSERC Discovery Grant,
an NSERC University Faculty Award, and an Ontario Ministry of Research
and Innovation Early Researcher Award.  The third author was partially supported by NSF-DMS Grant \#0835507 and and by a President's Council of Cornell Women Affinito-Stewart Grant.}

\author{Tara S. Holm}
\address{Department of Mathematics, Malott Hall, Cornell
  University, Ithaca, New York 14853-4201, USA}
\email{tsh@math.cornell.edu}
\urladdr{\url{http://www.math.cornell.edu/~tsh/}}

\keywords{}
\subjclass[2000]{Primary: 19L47; Secondary: 53D20}

\date{\today}


\begin{abstract}
Let $(M,\omega,\Phi)$ be a Hamiltonian $T$-space and let $H\subseteq
T$ be a closed Lie subtorus. Under some technical hypotheses on 
the moment map $\Phi$, we prove that 
there is no additive torsion in
  the {\bf integral full orbifold $K$-theory} of the orbifold
  symplectic quotient $[M/\!/H]$. 
 Our main technical tool is an
  extension to the case of moment map level sets the well-known result
  that  many  components of the moment map of a Hamiltonian $T$-space $M$ are
  Morse-Bott functions on $M$. As first applications, we conclude that a large class
  of symplectic toric orbifolds, as well as certain $S^1$-quotients of GKM spaces, have integral full orbifold $K$-theory
  that is free of additive torsion.  
Finally, we
 introduce the notion of {\bf semilocally Delzant} which allows us
  to formulate sufficient conditions under which the hypotheses of the main
  theorem hold. We illustrate our results using low-rank coadjoint orbits of type $A$ and $B$. 
\end{abstract}

\maketitle

\setcounter{tocdepth}{1}
\tableofcontents

\section{Introduction}\label{sec:intro}

The main purpose of this manuscript is to show that the {\bf integral
  full orbifold $K$-theory} of several classes of orbifolds
${\mathfrak X}$ arising as abelian symplectic quotients are free of
additive torsion. An important subclass of symplectic quotients
to which our results apply are {\bf orbifold toric varieties}, of
which {\bf weighted projective spaces} are themselves a special
case. 

Orbifold toric varieties are global quotients of a manifold by a
torus action, and are therefore a natural starting point for a study
of orbifolds. Many conjectures on orbifolds and orbifold invariants in
active areas of research (algebraic geometry, equivariant topology,
the theory of mirror symmetry, to name a few) 
have been first tested in the realm of orbifold toric varieties. 
More specifically, there has been historically \cite{Amr94b, Amr94a,
  Amr97, Kaw73, NisYos97} and also quite recently a burst of interest
in weighted projective spaces (and their {\bf{integral}} invariants);
for more recent work, see for instance \cite{BahFraRay07, BoiManPer06,
  CoaCorLeeTse06, GueSak08, Hol07a, Tym08}.

We note that the application of our main result to orbifold toric
varieties is in the spirit of the previous work in the study of (both
ordinary and orbifold) topological invariants of weighted projective
spaces \cite{Kaw73, Amr94a, GolHarHolKim08}. Moreover, recent work of
Hua \cite{Hua09} uses algebro-geometric methods to show that
Grothendieck groups of a large class of toric Deligne-Mumford stacks
are free of additive torsion. This part of our results
(Theorem~\ref{theorem:intro-toric} below) can be viewed as a full
orbifold K-theory analogue of his results in the topological category,
proved via symplectic geometric methods. However, 
the scope of our results is more general. While the above-mentioned
work all deal with certain cases of orbifold toric varieties, the
techniques in this manuscript, which build upon the symplectic and
equivariant Morse theoretic methods developed in
\cite{GolHarHolKim08}, allow us to prove that the full orbifold
$K$-theory is free of additive torsion in more general settings. In
particular, we discuss non-toric examples in the later sections.

Let $T$ denote a compact connected
abelian Lie group, i.e., a torus. Suppose $(M, \omega, \Phi)$ is a
Hamiltonian $T$-space, with moment map \(\Phi: M \to \t^*\). 
Furthermore, let \(\beta: H \hookrightarrow T\) 
be a closed Lie subgroup (i.e. a
subtorus). Let \(\Phi_H: M \to \h^*\) be the induced $H$-moment map
obtained as the composition of \(\Phi: M \to \t^*\) with the linear
projection\footnote{By slight abuse of notation we use $\beta$ to also
  denote the linear map $\h \to \t$ obtained as the derivative of the
  inclusion map $\beta: H \into T$.} \(\beta^*: \t^*
\to \h^*.\) 
Suppose \(\eta \in \h^*\) is a regular value of $\Phi_H$,
and denote by
$Z:=\Phi_H^{-1}(\eta) \subseteq M$
 the corresponding
level set. Since $\eta$ is a regular value, $Z$ is a smooth
submanifold of $M$, and $H$ acts locally freely on $Z$. 
Let
\begin{equation}
  \label{eq:def-orbiquotient}
  {\mathfrak X} := [ M/\!/_{\eta} H ] = [ Z/H ]
\end{equation}
denote the quotient stack associated to the locally free $H$-action on
\(Z.\) This is an orbifold, also referred to as a {\bf Deligne-Mumford stack in the
differentiable category}. Let $\xi \in \t$ and recall that
$\Phi^\xi: = \langle \Phi,\xi\rangle:M\rightarrow \R$ denotes the corresponding component of the 
moment map.

The full orbifold $K$-theory ${\rm K_{orb}}({\mathfrak X})$ over $\Q$ was introduced by
Jarvis, Kaufman and Kimura in  \cite{JKK07}.  In the case that $\mathfrak X$ is formed as
an abelian quotient of a manifold $Z$ by a
locally free action of a torus $T$, the authors
of this manuscript
and Kimura gave an integral lift 
$\okf({\mathfrak X})$ in terms of the inertial $K$-theory $NK_T(Z)$ in \cite{GolHarHolKim08}.
This satisfies
$\okf({\mathfrak  X})\otimes \Q\cong {\rm K_{orb}}({\mathfrak X})$ as rings
\cite{BecUri07}.   Specifically, the full orbifold $K$-theory may be
described additively as a module over $K_T(\pt)$ by
$$
\okf({\mathfrak X}) = \bigoplus_{t\in T} K_T(Z^t).
$$
This differs from previous definitions of
``orbifold $K$-theory,'' e.g. that of
Adem and Ruan \cite{AdeRua03}. We refer the reader to the
introduction of \cite{GolHarHolKim08} for a more detailed discussion
of other notions of orbifold $K$-theory in the literature. 
In this manuscript, 
for $G$ a compact Lie group and $Y$ a $G$-space, we let $K_G(Y) = K^0_G(Y)$ denote the
Atiyah-Segal topological $G$-equivariant $K$-theory
\cite{Seg68}.  This is built from $G$-equivariant vector bundles 
when $Y$ is a compact $G$-space, and 
$G$-equivariant maps 
$[Y, \Fred(\mathcal{H}_G)]_G$ if $Y$ is noncompact (here $\mathcal{H}_G$ is a Hilbert 
space that contains infinitely many copies of 
every irreducible representation of $G$, see e.g. \cite{AtiSeg04}).
We may now state our main theorem about the structure of $\okf({\mathfrak X})$.

\begin{theorem}\label{thm:main}
Let $(M, \omega, \Phi)$ be a Hamiltonian $T$-space, $\beta: H \into T$
a connected subtorus with induced moment map $\Phi_H := \beta^* \circ
\Phi: M \to \h^*$. Suppose $\eta \in \h^*$ is a
regular value of $\Phi_H$, 
let $Z := \Phi_H^{-1}(\eta)$ denote its level set, and
${\mathfrak X}:=[Z/H]$ the associated quotient orbifold stack. 
Suppose 
there exists $\xi\in \t$ such that the following conditions hold:
\begin{enumerate}
\item[(1)] $H\subseteq \overline{\exp(t\xi)}$, the closure of the one-parameter subgroup 
generated by $\xi$ in $T$; 
\item[(2)] $f := \Phi^{\xi} \vert_Z$ is proper and bounded below;
\item[(3)] for each $t \in H$, $\pi_0(\Crit(f\vert_{Z^t}))$ 
  is finite;
\item[(4)] for each $t \in H$ and each connected component $C$ of
  $\Crit(f\vert_{Z^t})$, 
\begin{enumerate}
\item[(a)] $K_H^0(C)$ contains no additive
  torsion, and 
\item[(b)] $K_H^1(C)=0$.
\end{enumerate} 
\end{enumerate}
Then $\okf({\mathfrak X})$ contains no
additive torsion. 
\end{theorem}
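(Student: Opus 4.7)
The plan is to use the additive decomposition $\okf(\mathfrak X) = \bigoplus_{t \in H} K_H(Z^t)$ to reduce the theorem to the statement that, for each $t \in H$, the group $K_H^0(Z^t)$ is torsion-free and $K_H^1(Z^t)=0$; these are exactly the properties hypothesized in (4) for each critical component $C$. Because $H$ acts locally freely on $Z$ with finite stabilizers, only finitely many summands are nonzero, so the problem is honestly finite.

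Fix such a $t$. The main technical input---an extension of the classical Atiyah--Guillemin--Sternberg Morse--Bott result to the level set $Z$, as flagged in the abstract---is that $f|_{Z^t} = \Phi^{\xi}|_{Z^t}$ is Morse--Bott. By hypothesis (1), $t \in H \subseteq \overline{\exp(t\xi)}$, so $Z^{\xi} \subseteq Z^H \subseteq Z^t$, and a short argument using the $t$-invariant splitting of $T_pZ$ at a fixed point identifies the critical set of $f|_{Z^t}$ with $Z^{\xi}$. Conditions (2) and (3) then guarantee that $f|_{Z^t}$ is proper, bounded below, and has only finitely many critical components, with compact sublevel sets $Z^t_{\leq c}$.

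For the induction I would apply the $H$-equivariant long exact sequence of the pair $(Z^t_{\leq c+\epsilon},\, Z^t_{\leq c-\epsilon})$ as $c$ sweeps the critical values in increasing order. The relative term is the reduced $K_H^*$ of the Thom space of the negative normal bundle $\nu_C^-$ at the critical component $C$ of level $c$. Since $C$ is $H$-fixed, $\nu_C^-$ decomposes into $H$-weight spaces and inherits a canonical complex (hence equivariantly $\Spinc$) structure from $\omega$; the equivariant Thom isomorphism then identifies the relative term with $K_H^*(C)$, shifted in $\Z/2$-degree by $\rank_{\C} \nu_C^-$. By (4) this is torsion-free in even degree and zero in odd degree, so the long exact sequence collapses to
\[
0 \longrightarrow K_H^0(C) \longrightarrow K_H^0(Z^t_{\leq c+\epsilon}) \longrightarrow K_H^0(Z^t_{\leq c-\epsilon}) \longrightarrow 0
\]
with odd part zero. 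A short exact sequence of abelian groups with torsion-free quotient has torsion-free middle, so starting from the minimum critical level and terminating past the maximum critical value (above which $Z^t$ deformation retracts onto the last sublevel set because $f$ is proper with no further critical points), induction yields the claim.

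The hard step is establishing the Morse--Bott property for $f|_Z$ itself; the restriction to $Z^t$ then follows formally from $t$-invariance of $f$ and the inclusion $Z^\xi \subseteq Z^t$. On $M$, components of the moment map are Morse--Bott by the classical argument, but on the level set $Z = \Phi_H^{-1}(\eta)$ one must check separately that $\nabla(f|_Z)$ vanishes only on $Z^\xi$ and that the Hessian is non-degenerate transverse to it. This will lean on the regularity of $\eta$, which gives a clean description of $T_zZ$ as the symplectic perpendicular to the $H$-orbit directions, and on condition (1), which aligns the transverse non-degenerate directions of $f|_Z$ with those of the ambient Morse--Bott function $\Phi^\xi$ on $M$.
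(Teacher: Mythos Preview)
Your overall architecture matches the paper's: decompose $\okf(\mathfrak X)$ additively as $\bigoplus_{t} K_H(Z^t)$, then run an $H$-equivariant Morse--Bott induction on sublevel sets of $f|_{Z^t}$, using the Thom isomorphism to identify the relative term with $K_H^*(C)$ and collapsing the long exact sequence via hypothesis~(4). That skeleton is correct.

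The genuine gap is your identification of the critical set. You claim $\Crit(f|_{Z^t}) = Z^\xi$, but $Z^\xi$ is \emph{empty}: since $H \subseteq T^\xi := \overline{\exp(s\xi)}$, any point fixed by $\xi$ would be $H$-fixed, and a positive-dimensional torus acting locally freely has no fixed points. The familiar identity $\Crit(\Phi^\xi) = M^{T^\xi}$ holds on the ambient $M$, but on the level set the condition $df_p = 0$ reads $\omega_p(\xi^\sharp_p, v) = 0$ for all $v \in T_pZ = (T_p(H\cdot p))^{\omega_p}$, i.e.\ $\xi^\sharp_p \in T_p(H\cdot p)$. Thus $p$ is critical exactly when its $T^\xi$-orbit and its $H$-orbit agree infinitesimally, equivalently when $\dim \Stab_{T^\xi}(p) = \codim(H, T^\xi)$. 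These are maximal-stabilizer points, not fixed points; this is the content of Lemma~\ref{lemma:MorseBott}.

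This error propagates. Your assertion that ``$C$ is $H$-fixed'' is false for the same reason: each critical component is $H$-invariant but $H$ still acts on it locally freely. So your justification for the complex structure on $\nu_C^-$ (an $H$-weight splitting at a fixed point) does not apply. The paper instead extracts both the Morse--Bott nondegeneracy and the complex structure on $\nu_C^-$ from the local normal form of Proposition~\ref{prop:local-normal}, in which the directions normal to $C$ inside $Z$ are identified with the nontrivial $T_0$-weight spaces $\bigoplus_{\alpha \neq 0} V_\alpha$ of a linear symplectic representation; this is what makes the Thom isomorphism available and the degree shift even.

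Finally, the reduction from $Z$ to $Z^t$ is not via the vacuous chain $Z^\xi \subseteq Z^H \subseteq Z^t$. The paper simply observes that $Z^t = (\Phi_H|_{M^t})^{-1}(\eta)$ is again a regular level set inside the Hamiltonian $T$-space $M^t$, so the same $\xi$ satisfies hypotheses (1)--(4) there and Lemma~\ref{lemma:MorseBott} applies verbatim with $M$ replaced by $M^t$.
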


A direct consequence is that when the components of the critical set are isolated $H$-orbits, $\okf({\mathfrak X})$ contains no additive torsion (see Corollary~\ref{cor:iso}). We use this corollary to prove the following. 

\begin{theorem}\label{theorem:intro-toric}
Let  ${\mathfrak X}$ be a symplectic toric orbifold obtained as a 
symplectic quotient of a linear $H$-action on a complex affine space,
where $H$ is a connected compact torus. Then $\okf({\mathfrak X})$ is
free of additive torsion. 
\end{theorem}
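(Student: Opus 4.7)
The plan is to apply Corollary~\ref{cor:iso} (the isolated-orbit specialization of Theorem~\ref{thm:main} alluded to after its statement) to the standard realization $M = \C^N$, with $T := T^N$ acting diagonally and moment map $\Phi(z) = \tfrac{1}{2}(|z_1|^2,\dots,|z_N|^2)$, and the given subtorus $H \subseteq T$ cut out by the linear description of $\mathfrak{X}$. The main task is to exhibit a single $\xi \in \t$ that simultaneously delivers conditions (1)--(3) of Theorem~\ref{thm:main} and makes every component of $\Crit(f|_{Z^t})$ a single $H$-orbit for every $t \in H$.

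I would choose $\xi = (\xi_1,\dots,\xi_N) \in \t \cong \R^N$ with $\xi_i > 0$ for all $i$ and the coordinates rationally independent over $\Q$; such vectors are generic in the open positive orthant. Rational independence gives $\overline{\exp(t\xi)} = T^N \supseteq H$, verifying (1), while positivity forces $\Phi^\xi(z) = \tfrac{1}{2}\sum_i \xi_i |z_i|^2$ to be proper and bounded below on $\C^N$, and hence on the closed subset $Z = \Phi_H^{-1}(\eta)$, verifying (2).

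For the critical-set analysis, I would fix $t \in H$, set $I := \{i : t_i = 1\}$ so that $M^t = \C^I$ and $Z^t = \Phi_H^{-1}(\eta) \cap \C^I$, and use the standard symplectic characterization on the coisotropic level set $Z$: for $z \in Z^t$, $d(f|_{Z^t})_z = 0$ iff there exists $h \in \h$ with $(\xi - h)_M(z) = 0$. For $M = \C^N$ and $z$ of coordinate support $J \subseteq I$, this translates to $\xi|_J$ lying in the rational subspace $\image(\h \to \R^J)$ obtained by composing $\beta \colon \h \to \R^N$ with projection to the $J$-coordinates. Rational independence of $\xi|_J$ forces this image to equal all of $\R^J$, so $H \to T^J$ must be surjective; combined with local freeness of the $H$-action on $Z$, which forces $\ker(H \to T^J) = \Stab_H(z)$ to be finite and hence $\dim H \leq |J|$, this pins down $\dim H = |J|$. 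The equation $\Phi_H(z) = \eta$ then has a unique positive solution in $(|z_i|^2)_{i \in J}$, so critical points of support $J$ form a single $H$-orbit $H/\ker(H\to T^J)$; since only finitely many subsets $J \subseteq \{1,\dots,N\}$ can arise, condition (3) and the isolated-orbit hypothesis of Corollary~\ref{cor:iso} both hold.

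The most delicate ingredient is the critical-point characterization on the level set $Z$ (rather than on the ambient $M$), which is exactly the level-set Morse--Bott extension that the paper develops as its main technical tool; once that is in hand, the rest is combinatorial bookkeeping over the support subsets $J$. With isolated critical $H$-orbits established, Corollary~\ref{cor:iso} delivers the absence of additive torsion in $\okf(\mathfrak{X})$.
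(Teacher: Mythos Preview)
Your proposal is correct and follows essentially the same route as the paper's proof (Theorem~\ref{thm:toric}): both reduce to Corollary~\ref{cor:iso} by showing that, for a generic $\xi$, every critical point of $\Phi^\xi\vert_{Z^t}$ has support of size exactly $\dim H$, whence the moment-map equation pins down the norms and the critical set breaks into finitely many isolated $H$-orbits. The only cosmetic differences are that you are more explicit about $\xi$ (positive, rationally independent coordinates) and you deduce that $\h\to\R^J$ is an isomorphism via surjectivity-from-rational-independence plus injectivity-from-local-freeness, whereas the paper first reads off $|J|=\dim H$ from the stabilizer-dimension criterion in Lemma~\ref{lemma:MorseBott} and then invokes regularity of $\eta$; the two orderings are logically equivalent.
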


As mentioned above, this corollary is similar in spirit to Kawasaki's result that the
integral cohomology of (the underlying topological spaces of) weighted
projective spaces are free of additive torsion. Kawasaki showed in
\cite{Kaw73} that the integral cohomology groups of (the coarse moduli
space of) a weighted projective space agree with those of a smooth
projective space, but the ring structure differs, with structure
constants that depend on the weights. Hence we expect that the
richness of the data in $\okf({\mathfrak X})$ for toric orbifolds is
also contained not in additive torsion but rather in the
multiplicative structure constants of the ring. We leave
this for future work.

The main theorem may be applied in situations other
than that of the Delzant construction of orbifold toric varieties; 
the content of the last two sections of this
manuscript is an exploration of other situations in equivariant symplectic
geometry in which the hypotheses also hold.
First, we observe in Section~\ref{sec:GKM} that
the hypotheses above on the relevant connected components $C$ hold for  $S^1$-symplectic
quotients of Hamiltonian $T$-spaces which are GKM, under a technical
condition on the choice of subgroup $S^1 \subseteq T$. Spaces with
$T$-action which satisfy the so-called ``GKM conditions,'' introduced in the influential work of Goresky-Kottwitz-Macpherson
\cite{GKM}, are extensively studied in equivariant algebraic 
geometry, symplectic geometry, and geometric representation theory,
and encompass a wide array of examples.  We use a corollary of the
main theorem to prove Theorem~\ref{theorem:GKM}, which states that the
full orbifold $K$-theory of the quotient of a GKM space by certain
circle subgroups is free of additive torsion.
Secondly, we explore in 
Section~\ref{sec:semilocal} how the essential properties of the
Delzant construction (which allow us to prove Theorem~\ref{theorem:intro-toric})
may in fact be placed in a more general framework of phenomena in
torus-equivariant symplectic geometry which may be informally described as
`taking place within a $T$-equivariant Darboux neighborhood
of an isolated $T$-fixed point.' The precise statements are given in detail in
Section~\ref{sec:semilocal}, where we introduce the notion of a closed
$H$-invariant subset of a Hamiltonian $T$-space being {\bf semilocally
  Delzant} (with respect to $H$), and make some initial remarks on
situations in which this notion applies. One class of spaces to which
our definitions apply are the generalized flag varieties $G/B$ and
$G/P$, which may be covered by Darboux neighborhoods given by the Weyl
translates of the open Bruhat cell. Furthermore, the natural
$T$-action on generalized flag varieties (that of the maximal torus
$T$ in $G$) is also well-known to be GKM. In both
Sections~\ref{sec:GKM} and~\ref{sec:semilocal}, we illustrate our
results using examples of this type.

\section[Morse-Bott theory on level sets of moment maps]{A local normal form and Morse-Bott theory on level sets of moment maps}\label{sec:normalform}

We begin with our main technical lemma (Lemma~\ref{lemma:MorseBott})
regarding the Morse-Bott theory of moment maps in equivariant
symplectic geometry. The techniques used to prove this result are
fairly standard in the field, but we have not seen this particular
formulation in the literature.  It is
well-known that components of moment maps $\Phi^{\xi}=\langle
\Phi,\xi\rangle:M\to \R$ are Morse-Bott functions on a Hamiltonian
$T$-space $M$, for any
$\xi\in\t$. In addition, these components induce Morse-Bott functions
on smooth symplectic quotients $M/\!/_{\eta} H$, where $H$ is a
closed Lie subgroup of $T$, and $\eta$ is a regular value of the
$H$-moment map $\Phi_H$ \cite{MarWei74}.

What seems heretofore unnoticed\footnote{However, a result of this nature appears to be
  implicit in the work of Lerman and Tolman on the classification of
  orbifold toric varieties \cite{LT97}, and even earlier
in work of Marsden and Weinstein \cite{MarWei74} and Atiyah \cite{Ati82}.}
is that a component $\Phi^{\xi}$ of the $T$-moment map, restricted to
the level set $\Phi_H^{-1}(\eta)$ itself, is also a Morse-Bott
function
 when $H$ is contained in the closure of the 
 subgroup generated by $\xi$. 
 This may be deduced from the following local normal form result of Hilgert, Neeb, and Plank \cite[Lemmata 2.1 and 2.2]{HilNeePla94}, which builds on work of Guillemin and Sternberg \cite[Chapter II]{GS84a}. 
 Note that generic $\xi$ satisfy this condition.

\begin{proposition}[Hilgert, Neeb, Plank]\label{prop:local-normal}
Let $(M, \omega, \Phi)$ be a Hamiltonian $T$-space with moment map
\(\Phi: M \to \t^*.\) Let \(p \in M.\) Then there exists a
$T$-invariant neighborhood \(\mathcal{U} \subseteq M\) of the orbit \(T \cdot p
\subseteq M,\) a subtorus $T_1 \subseteq T$ and a symplectic vector
space $V$ such that:
\begin{enumerate}
\item[1.] There is a decomposition $T = T_0 \times T_1$, where $T_0 =
  Stab(p)_0$ is the connected component of the identity in the
  stabilizer group of $p$ in $T$.
\item[2.] There is a $T$-equivariant symplectic open covering from an open subset
  $\mathcal{U}'$ of $T_1\times\t_1^*\times V$ onto $\mathcal{U}$,
  where the $T$-action on $T_1 \times \t_1^* \times V$ is given by
    \begin{equation}\label{eq:local-normal}
      \begin{split}
    (T_0 \times T_1) \times (T_1 \times\t_1^*\times V) & \to (T_1\times\t_1^*\times V)\\
    ((t_0,t_1),(g,\gamma,v)) & \mapsto (t_1\cdot g, \gamma, \rho(t_0) v),
    \end{split}
    \end{equation}
    where $\rho : T_0\to Sp(V)$ is a linear symplectic representation.
\item[3.] There exists a complex structure $I$ on $V$ such that $\langle
  v, w \rangle := \omega_V(Iv,w)$ defines a positive definite
   scalar product on
  $V$. Let \(V = \bigoplus_\alpha V_{\alpha}\) be the decomposition of
  $V$ into isotypic components corresponding to weights \(\alpha \in
  \t_0^*.\) With respect to these local coordinates, the moment map
  $\Phi'$ on $\mathcal{U}' \subseteq T_1 \times \t_1^* \times V$ is given by
\begin{equation}\label{eq:local-Phi}
\begin{split}
\Phi': \mathcal{U}' \subseteq T_1\times\t_1^*\times V & \to \t^*\cong \t_0^*\oplus \t_1^* \\
(g,\eta,v) & \mapsto  \Phi'(1,0,0) + \left(\frac{1}{2}\sum ||v_\alpha||^2\alpha,  \eta\right).
\end{split}
\end{equation}
\end{enumerate}
\end{proposition}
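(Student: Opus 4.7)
The plan is to establish this local normal form via the standard equivariant symplectic slice construction of Marle--Guillemin--Sternberg, adapted to the abelian setting. First, since $T_0 = \Stab(p)_0$ is a subtorus of the torus $T$, I choose a complementary subtorus $T_1 \subseteq T$ with $T = T_0 \times T_1$; dually this gives $\t^* = \t_0^* \oplus \t_1^*$, yielding item~(1).

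Next, I analyze the tangent geometry at $p$. The orbit tangent space $T_p(T\cdot p)$ has dimension $\dim T_1$ and is $\omega_p$-isotropic: since $T$ is abelian, for $X, Y \in \t$ the $T$-invariance of $\Phi^X$ gives $\omega_p(X^\#_p, Y^\#_p) = -(d\Phi^X)_p(Y^\#_p) = 0$. I then form the \emph{symplectic slice} $V := T_p(T\cdot p)^\omega / T_p(T\cdot p)$, which inherits a symplectic form $\omega_V$ and a linear $T_0$-representation $\rho: T_0 \to Sp(V)$ since $T_0$ stabilizes $p$. Averaging any compatible almost complex structure over $T_0$ produces the desired $T_0$-invariant $I$ together with a positive-definite inner product, and decomposing into $T_0$-isotypic components yields $V = \bigoplus_\alpha V_\alpha$.

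With these ingredients I build the model space $Y := T_1 \times \t_1^* \times V$, equipped with the $T$-action described in~\eqref{eq:local-normal} and the symplectic form that is canonical on the $T^*T_1 \cong T_1 \times \t_1^*$ factor and linear on $V$. A direct computation shows that $Y$ admits a moment map of exactly the form on the right-hand side of~\eqref{eq:local-Phi} (up to a constant): the moment map for $T_1$ acting on $T^*T_1$ by translation is projection to $\t_1^*$, while the moment map for the unitary $T_0$-representation $V$ is the standard quadratic expression $\tfrac{1}{2}\sum ||v_\alpha||^2\alpha$ in the weights. The zero-section orbit $T_1 \times \{0\} \times \{0\} \subseteq Y$ then matches the equivariant, symplectic, and isotropic data of $T\cdot p \subseteq M$ to first order.

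To promote this infinitesimal agreement to an actual $T$-equivariant symplectomorphism between neighborhoods of the two orbits, I invoke the equivariant Weinstein isotropic embedding theorem: starting from any $T$-equivariant diffeomorphism of tubular neighborhoods that matches the normal bundle data, an equivariant Moser interpolation argument deforms the pulled-back symplectic form into the model symplectic form. The main obstacle is ensuring that the resulting $T$-equivariant symplectomorphism intertwines the two moment maps; this is handled by the general principle that any two moment maps for a given Hamiltonian torus action on a connected symplectic manifold differ by a constant element of $\t^*$, and that constant is precisely $\Phi'(1,0,0)$, which accounts for its appearance in~\eqref{eq:local-Phi}.
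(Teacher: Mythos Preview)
The paper does not supply its own proof of this proposition: it is quoted as a known result of Hilgert, Neeb, and Plank \cite[Lemmata~2.1 and~2.2]{HilNeePla94}, building on Guillemin--Sternberg \cite{GS84a}. Your sketch via the Marle--Guillemin--Sternberg symplectic slice construction is indeed the standard route to such local normal forms and is essentially the argument underlying the cited reference, so in that sense your approach is the expected one.

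There is, however, one genuine point your sketch does not address, and it accounts for a feature of the proposition as stated. You conclude with a $T$-equivariant \emph{symplectomorphism} between a neighborhood of $T_1 \times \{0\} \times \{0\}$ in the model and a neighborhood of $T\cdot p$ in $M$, but the proposition asserts only a $T$-equivariant symplectic \emph{open covering} $\mathcal{U}' \to \mathcal{U}$. The reason is that $\Stab(p)$ need not be connected: one has $\Stab(p) = T_0 \times \Gamma$ with $\Gamma \subseteq T_1$ finite, so the orbit $T\cdot p$ is diffeomorphic to $T_1/\Gamma$ rather than $T_1$. The zero-section orbit $T_1 \times \{0\} \times \{0\}$ in your model therefore cannot match $T\cdot p$ equivariantly when $\Gamma \neq 1$; instead the model covers a tubular neighborhood of $T\cdot p$ with deck group $\Gamma$. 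Your Moser/Weinstein step should thus produce the MGS model $T \times_{\Stab(p)} (\t_1^* \times V)$, and one then observes that $T_1 \times \t_1^* \times V$ with the action~\eqref{eq:local-normal} maps onto it as a finite $T$-equivariant symplectic cover. (The paper itself makes explicit use of this covering-versus-symplectomorphism distinction at the start of the proof of Lemma~\ref{lemma:MorseBott}.) With this adjustment inserted, your argument is complete.
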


For any $\xi\in \mathfrak{t}$, let $T^\xi:= \overline{\exp(t\xi)}$ denote the closure of the one-parameter subgroup generated by $\xi\in\mathfrak{t}$. 
Using the notation set in the Introduction, we now have the following. 

\begin{lemma}\label{lemma:MorseBott}
Let $(M,\omega,\Phi)$ be a Hamiltonian $T$-space, and $H\subseteq T$ a subtorus.
Let  $Z:= \Phi_{H}^{-1}(\eta)$ be a level set of the moment map for the $H$ action at a regular value. The function
$$f:= \Phi^{\xi}|_Z: Z\rightarrow \R$$
is a Morse-Bott function on $Z$ for every $\xi\in \mathfrak{t}$
such that $H\subseteq T^\xi$.
\end{lemma}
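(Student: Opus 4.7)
My plan is to verify the Morse--Bott property locally at each critical point of $f$ by specializing Proposition~\ref{prop:local-normal} to the given setup. Since being Morse--Bott is a pointwise condition, I fix a critical point $p \in Z$ of $f$ and apply Proposition~\ref{prop:local-normal} at $p$ to obtain coordinates $(g, \mu, v) \in T_1 \times \t_1^* \times V$ on a $T$-invariant neighborhood of the orbit $T \cdot p$, with the decomposition $T = T_0 \times T_1$ and $T_0 = \Stab(p)_0$. The assumption that $\eta$ is a regular value of $\Phi_H$ (equivalently, that $H$ acts locally freely on $Z$) forces $\h \cap \t_0 = 0$, so the projection $\t \to \t_1$ embeds $\h$ as a subspace $\bar\h \subseteq \t_1$; I record the inverse as a linear map $\phi : \bar\h \to \t_0$ (so that $\h = \{\phi(\zeta_1) + \zeta_1 : \zeta_1 \in \bar\h\}$) and fix a complement $W$ with $\t_1 = \bar\h \oplus W$. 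Finally I write $\xi = \xi_0 + \xi_1$ along $\t = \t_0 \oplus \t_1$ and $\xi_1 = \xi_1^{\bar\h} + \xi_1^W$ along $\t_1 = \bar\h \oplus W$.

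Next I describe $Z$ and $f$ explicitly in these coordinates. Using the moment map formula~\eqref{eq:local-Phi}, the defining equations $\langle \Phi, \zeta\rangle = \langle \eta, \zeta\rangle$ for $\zeta \in \h$ consolidate into the single vector equation
$$\mu|_{\bar\h} \;=\; -\tfrac{1}{2}\sum_\alpha \|v_\alpha\|^2 (\alpha \circ \phi) \;\in\; \bar\h^*,$$
so $Z$ is locally a graph parametrized by $(g, \nu, v)$ with $\nu := \mu|_W \in W^*$. Substituting into $f = \Phi^\xi|_Z$ and collecting terms, I expect the expression
$$f(g, \nu, v) \;=\; \Phi^\xi(p) \;+\; \tfrac{1}{2}\sum_\alpha \|v_\alpha\|^2\, \alpha(\tilde\xi) \;+\; \nu(\xi_1^W), \qquad \tilde\xi := \xi_0 - \phi(\xi_1^{\bar\h}) \in \t_0,$$
where the $\alpha$ range over the weights of the $T_0$-representation $V$.

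Once $f$ is in this polynomial form the Morse--Bott check is immediate. If $\xi_1^W \neq 0$ the $\nu$-derivative is the nonzero constant $\xi_1^W$, so this chart contains no critical points of $f$ (vacuously Morse--Bott); otherwise the critical set in the chart is the linear submanifold $\{(g,\nu,v) : v_\alpha = 0 \text{ whenever } \alpha(\tilde\xi) \neq 0\}$, and the Hessian of $f$ along the transverse $v_\alpha$-directions is the diagonal form with the nonzero eigenvalues $\alpha(\tilde\xi)$, hence nondegenerate. The main obstacle I anticipate is purely bookkeeping: setting up the splittings $\t = \t_0 \oplus \t_1$ and $\t_1 = \bar\h \oplus W$ compatibly and carefully tracking the induced decomposition of $\xi$ and $\mu$, so that the level-set constraint and $f$ decouple into the clean expressions above. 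The hypothesis $H \subseteq T^\xi$ does not enter the local Morse--Bott check in a decisive way, but it guarantees the compatibility of the $H$- and $\xi$-data required when this lemma is invoked in the proof of Theorem~\ref{thm:main}.
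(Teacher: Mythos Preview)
Your proof is correct and takes a more direct, computational route than the paper's. Both arguments invoke the local normal form of Proposition~\ref{prop:local-normal} at a critical point, but thereafter diverge: the paper first establishes a global characterization of $\Crit(f)$ via $T^\xi$-stabilizer dimensions (showing $p \in \Crit(f)$ iff $\dim\Stab_{T^\xi}(p) = \codim(H,T^\xi)$), and this step genuinely requires $H \subseteq T^\xi$ to pass from $\xi^\sharp_p \in \mathsf{T}_p(H\cdot p)$ to $\mathsf{T}_p(T^\xi\cdot p) \subseteq \mathsf{T}_p(H\cdot p)$; only then does it translate into local coordinates and check the Hessian. You instead parametrize $Z$ directly as a graph over $T_1 \times W^* \times V$, obtain the explicit polynomial expression for $f$, and read off the Morse--Bott property by inspection. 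Your approach thereby confirms (as you observe) that the hypothesis $H \subseteq T^\xi$ is not actually needed for the Morse--Bott conclusion itself; it is used elsewhere in the paper to identify critical components with single $H$-orbits. Two small points worth tightening: Proposition~\ref{prop:local-normal} provides only a $T$-equivariant symplectic open \emph{covering} $\mathcal{U}' \to \mathcal{U}$ rather than a symplectomorphism, and you should note (as the paper does) why the local analysis transfers across it; and since you fix a critical point $p$ corresponding to $(1,0,0)$ at the outset, the branch $\xi_1^W \neq 0$ of your dichotomy cannot actually occur, so that case is vacuous rather than merely covering non-critical charts.
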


\begin{proof}
We show that for any point \(p \in Z\) such that $df_p=0$,
\begin{enumerate}
\item[1.] the connected component of \(\Crit(f)\)  containing $p$ is a submanifold, where $\Crit(f)$ is the critical set of $f$, and
\item[2.] the Hessian of $f$ at $p$ is non-degenerate in the directions normal to the connected component of $\Crit(f)$ containing $p$.
\end{enumerate}
Since the conditions to be checked are purely local, we may argue separately for each point \(p\) in the critical set \(\Crit(f).\)

For the purposes of this argument,
we may assume without loss of generality that the $T$-equivariant symplectic open
cover $\mathcal{U}' \to \mathcal{U}$ of
Proposition~\ref{prop:local-normal} is in fact a $T$-equivariant
symplectomorphism. The only part of this
claim requiring justification
is the relationship, in general, between the moment maps $\Phi_1$ and
$\Phi_2$ associated to Hamiltonian $T$-spaces $(M_1, \omega_1,
\Phi_1)$ and $(M_2, \omega_2, \Phi_2)$ where there exists a
$T$-equivariant symplectic open cover \(\pi: M_1 \to M_2.\) Since by
assumption \(\pi_1^*\omega_2 = \omega_1\) and
\(\pi_*(\xi^{\sharp}_{M_1}) = \xi^{\sharp}_{M_2}\) for all \(\xi \in
\t\) where $\xi^{\sharp}_{M_i}$ denotes the infinitesmal vector fields
generated by $\xi$ on the $M_i$, it follows immediately from
Hamilton's equations that \(\pi^*\Phi_2\) may be chosen as a moment
map $\Phi_1$ for the $T$-action on $M_1$.
In particular, since $\pi$ is an open covering,
the local argument for $\Phi_1$ in a small enough neighborhood of a
point $p$ in $M_1$ translates directly to an analogous argument in
$M_2$ for $\Phi_2$. Therefore we henceforth assume
that~\eqref{eq:local-normal} and~\eqref{eq:local-Phi} locally
represent a neighborhood of $p$, and $\Phi$ near $p \in Z$, respectively.

We continue with a characterization of the critical points
$\Crit(f) \subseteq Z$.
Recall
$T^\xi := \overline{\exp(t\xi)}$.
Let $\Stab_{T^\xi}(p)$ denote the stabilizer group in $T^\xi$
of $p$ and 
$\codim(H, T^\xi)$
 the codimension of the
subgroup 
$H$
in $T^\xi$. Suppose \(p \in Z.\) We claim that $p \in \Crit(f)$
if and only if
\(\dim(\Stab_{T^\xi}(p)) = 
 \codim(H, T^\xi)).\) 
Note that $p\in Z$ immediately implies $\dim\Stab_{T^\xi}(p) \leq
\codim(H, T^\xi))$, 
 since $H$
acts locally freely on $Z$. By definition,
a point \(p \in Z\) is critical for $f$ if and only if  
\[
df_p(v) = \langle d\Phi_p(v), \xi \rangle = \omega_p(\xi^{\sharp}_p, v) = 0, \quad \forall v \in \mathsf{T}_pZ,
\]
where $\mathsf{T}_pZ$ denotes the tangent space at $p$ to $Z$.
Note also that the tangent space 
$$
\mathsf{T}_pZ = \mathsf{T}_p\Phi_H^{-1}(\eta) = (\mathsf{T}_p(H \cdot
p))^{\omega_p} \subseteq \mathsf{T}_pM.
$$
Thus \(p \in Z\) is critical for $f$ if and only if $$\xi^{\sharp}_p
\in ((\mathsf{T}_p(H \cdot p))^{\omega_p})^{\omega_p} = \mathsf{T}_p(H \cdot p).$$ Since
$\xi$ generates $T^\xi$, it follows that $p\in\Crit(f)$ if and only if
\begin{equation}\label{eq:Tp-in-Kp}
\mathsf{T}_p(T^\xi \cdot p) \subseteq \mathsf{T}_p(H \cdot p),
\end{equation}
Hence $\dim \Stab_{T^\xi}(p) \geq 
\codim(H, T^\xi)$. 
Thus $p \in Z$ is critical for $f$ if and only if $\dim
\Stab_{T^\xi}(p)= 
\codim(H, T^\xi)$.

The above argument shows that for any \(\xi \in \t\)
with $H\subseteq T^\xi$, 
the critical set \(\Crit(f)\) is precisely the union of sets of the form $Z^{(T')}$
for subtori $T'$ of $T^\xi$ such that $\dim(T') = 
\codim(H, T^\xi)$, where
\[
Z^{(T')} := \{ p \in Z:  \Stab_{T^\xi}(p) = T' \}
\]
consists of the points whose stabilizer group in $T^\xi$ is precisely
$T'$. 
Since 
$H$ 
acts locally freely on $Z$, a
subtorus $T'$ of $T^\xi$ as above has maximal dimension among subtori
of $T^\xi$ with nonempty $Z^{(T')}$.
Now let \(p \in \Crit(f)\).
Consider local coordinates near $p$ as
in~\eqref{eq:local-normal}, with $\Phi$ near $p$ described
by~\eqref{eq:local-Phi}.
Write \(\xi = \xi_0 + \xi_1\) for \(\xi_0 \in
\t_0, \xi_1 \in \t_1.\) We first determine the intersection of
$\Crit(f)$ with this coordinate chart, in terms of
these local coordinates. From the description of the $T=T_0 \times T_1$-action
in~\eqref{eq:local-normal}, and from the fact observed above that $p$ is in $\Crit(f)$
precisely when its
stabilizer subgroup is of maximal possible dimension, it follows that
$\Crit(f)$ is the set of points of the form
\(\{ (g, \gamma, v): v \in V_0 \}\) where $V_0$ is the subspace of $V$
on which $T_0$ acts trivially. In particular, 
$\Crit(f)$ is a submanifold of $Z$ near $p$.

Finally, we show that the Hessian of $f$ near
$p$ is nondegenerate on those tangent directions in $\mathsf{T}_pZ$ corresponding to tangent vectors of the form
\(\{ (0, 0, \sum_{\alpha \neq 0} v_\alpha) : v_\alpha \in V_\alpha,
\alpha \neq 0 \}\) in the chosen local coordinates.
Recall that for tangent vectors \(v, w \in \mathsf{T}_pZ,\) the Hessian
\(\Hess(f)_p(v,w)\) is computed by \({\mathcal
  L}_{\tilde{v}} {\mathcal L}_{\tilde{w}}(f)\) where
$\tilde{v}, \tilde{w}$ are arbitrary extensions of $v,w$ to vector
fields in a neighborhood of $p$ in $Z$ (and ${\mathcal L}_X$ denotes a
Lie derivative along a vector field $X$). In the local coordinates of
Proposition~\ref{prop:local-normal}, any two vectors of the form \(v =
(0,0, \sum_{\alpha \neq 0} v_\alpha), w = (0,0, \sum_{\alpha \neq
  0}w_\alpha)\) may be extended to a neighborhood as the constant vector field
\(\tilde{v} \equiv (0,0,\sum_{\alpha \neq 0} v_\alpha), \tilde{w}
\equiv (0,0, \sum_{\alpha \neq 0} w_{\alpha}).\) We then observe that the
description of $\Phi$ in~\eqref{eq:local-Phi} implies that for such a
$\tilde{w}$,
\[
{\mathcal L}_{\tilde{w}}(f) = df(\tilde{w}) = d(\Phi^{\xi_0} \vert_Z)(\tilde{w}),
\]
since $\tilde{w}$ contains no component in $\t_1^*$.  It then suffices to show that the Hessian of the
$\t_0^*$-component of $\Phi$ is nondegenerate in the directions
$\oplus_{\alpha \neq 0} V_\alpha$. From the local normal form of
$\Phi$ in~\eqref{eq:local-Phi}, this is just a standard quadratic
moment map for a linear symplectic action of a torus on a symplectic
vector space, so this non-degeneracy is classical (see
e.g.\ \cite{Ati82}).
\end{proof}

\section{The proof and a corollary of the main theorem}\label{sec:main}

We now prove the main theorem. The argument uses
equivariant Morse theory of the moment map, most of which is standard (see, for example, \cite{Kir84,TW99,Ler04,HarLan07}).
The novel feature here involves the use of a component of the moment map
on a level set
of a moment map for a partial torus action.  We use the same notation
as in the introduction.

\begin{proof} [\bf Proof of Theorem~\ref{thm:main}]

We first note that since the statement of the theorem involves only
the additive structure of $\okf$, we need only recall the definition
(and computation) of $\okf({\mathfrak X})$ as an additive group. In
\cite{GolHarHolKim08} (cf. also \cite{BecUri07}), the integral full orbifold $K$-theory of
orbifolds ${\mathfrak X}$ arising as abelian symplectic quotients (by a torus $H$) is
described via an isomorphism \cite[Remark 2.5]{GolHarHolKim08}
\[
\okf({\mathfrak X}) \cong NK_{H}(Z) := \bigoplus_{t \in H} K_{H}
  \left( Z^t \right)
\]
where the middle term is the $H$-equivariant integral {\bf inertial $K$-theory} of the manifold $Z:=(\Phi_{H})^{-1}(\eta)$, defined additively as the direct sum above.  We now show that the right-hand side is torsion free.

Note that  $Z^t = \left( \Phi_{H} \vert_{M^t} \right)^{-1}(\eta)$, so it is itself a level set for the $H$-moment map on $M^t$ for each $t\in T$. 
Suppose $\xi\in \mathfrak{t}$ satisfies the hypotheses of the theorem, and let $f=\Phi^\xi|_{Z}$.
Since $f$ is proper and bounded below, then clearly $f|_{Z^t}$ is also proper and bounded below.  It is now immediate that $\xi$ satisfies conditions (1)--(4) for the Hamiltonian $T$-space $M^t$.
Thus without loss of generality,
we need only check that $K_{H}\left( Z \right)$ is torsion-free; all
other cases follow similarly. 

By Lemma~\ref{lemma:MorseBott}, $f $ 
is a Morse-Bott function. 
Denote the connected components of $\Crit(f)$ by
$\{C_j\}_{j=1}^\ell$, where $\ell$ is finite by condition
~(3) 
 and assume
without loss of generality that $f(C_i)<f(C_j)$ if $i < j$. Because $f$ is bounded below and proper, all
components are closed and compact, and there exists a minimal component, which we denote $C_0$.

Assume $Z$ is nonempty. We 
build the equivariant $K$-theory of $Z$ inductively by studying the critical
sets, beginning with the base
case. By assumption, \(K^0_{H}(C_0) \) has no additive torsion and $K^1_{H}(C_0) =
0$. For small enough \(\varepsilon >0,\) consider the submanifolds
\[
Z^+_j = f^{-1}((-\infty, f(C_j) +
\varepsilon)),
\quad
Z^-_j = f^{-1}((-\infty, f(C_j) -
\varepsilon)),
\]
where $\varepsilon$ is chosen so that $C_j$ is the only critical component contained in $Z_j^+
\setminus Z_j^-$. Using the $2$-periodicity of (equivariant) $K$-theory, there is a periodic long
exact sequence
\begin{equation}
  \label{eq:LES}
 \begin{array}{c}
 \xymatrix{
 & K^0_{H}(Z_j^+) \ar[r] & K^0_{H}(Z_j^-)\ar[dr] & \\
  K^0_{H}(Z_j^+, Z_j^-)\ar[ur] & & &   K^1_{H}(Z_j^+,Z_j^-)\ar[dl] \\
 & K^1_{H}(Z_j^-) \ar[ul] & K^1_{H}(Z_j^+)\ar[l] &
 } \end{array}
\end{equation}
in equivariant $K$-theory for the pair $(Z_j^+, Z_j^-)$. 
Choose an $H$-invariant metric on $Z$, and identify $K^*_H(Z_j^+,
Z_j^-)$ with $K^*_H(D(\nu_j^-), S(\nu_j^-))$, where $D(\nu_j^-), S(\nu_j^-)$
are the disc and sphere bundles, respectively, of the negative normal bundle to $C_j$
with respect to $f$.  
The equivariant Thom isomorphism also says that
$K^*_H(D(\nu_j^-), S(\nu_j^-)) \cong K^*_H(C_j)$.
There is no degree shift
 since the (real) dimension of the negative normal bundle is even (as can be seen from
 Proposition~\ref{prop:local-normal}) and $K$-theory is
 $2$-periodic. By assumption, $K^1_H(C_j)=0$, and by the inductive
 assumption we have $K^1_H(Z_j^-)=0$. Hence we may immediately
 conclude from~\eqref{eq:LES} that $K^1_H(Z_j^+)=0$ and that there is
 a short exact sequence 
\begin{equation}
  \label{eq:SES}
  0 \to K^0_{H}(Z_j^+, Z_j^-) \to K^0_{H}(Z_j^+) \to
  K^0_{H}(Z_j^-) \to 0.
\end{equation}
By induction, \(K^0_{H}(Z_j^-)\) has no additive torsion, and by assumption,
\[
K^0_{H}(Z_j^+, Z_j^-) \cong K^0_H(D(\nu_j^-), S(\nu_j^-)) \cong
K^0_{H}(C_j)
\]
does not either. We conclude
that $K^0_H(Z_j^+)$ is also free of additive torsion. 
Hence by induction we conclude that $K^0_H(Z_\ell^+)$ is free of
additive torsion. Since $C_\ell$ is the maximal critical component,
there are no higher critical sets, so 
the negative gradient flow with respect to 
$f$ yields an $H$-equivariant deformation retraction from $Z$ to
$Z_\ell^+$. Hence $K_H(Z) \cong K_H(Z_\ell^+)$, and in particular we
may conclude that $K^0_H(Z)$ is free of additive torsion, as desired. 
\end{proof}

\begin{remark} 
In the course of the proof, we have also shown that $K^1_H(Z^t)=0$ for
all $t\in H$. In the inductive arguments given in Sections~\ref{sec:toric}
and~\ref{sec:GKM}, we will need this additional fact 
to obtain Theorems~\ref{thm:toric} and~\ref{theorem:GKM}. 
\end{remark}

We now turn to the first application of Theorem~\ref{thm:main}, the case when the critical set consists of isolated $H$-orbits.

\begin{corollary}\label{cor:iso}
 Let ${\mathfrak X}= [ Z/H ]$ be an orbifold constructed as in~\eqref{eq:def-orbiquotient}. As above, suppose
 that there exists $\xi\in \t$ such that 
\begin{itemize} 
\item $H\subseteq T^\xi$,
\item $f:=\Phi^\xi|_Z$ is proper and bounded below, and
\item for every $t\in H$, $Crit(f|_{Z^t})$ consists of 
  finitely many isolated $H$-orbits.
\end{itemize} 
Then  $\okf({\mathfrak X})$
contains no additive torsion. Furthermore,  $K^1_H(Z^t)=0$ for all $t\in H$.
 \end{corollary}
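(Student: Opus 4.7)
The plan is to deduce Corollary~\ref{cor:iso} as a direct consequence of Theorem~\ref{thm:main} by verifying the four hypotheses of that theorem under the assumptions of the corollary. Hypotheses (1) and (2) of the theorem are assumed in the corollary verbatim. For hypothesis (3), note that every $H$-orbit is connected (since $H$ is a connected torus), so a finite collection of $H$-orbits inside $\Crit(f\vert_{Z^t})$ contributes only finitely many connected components, and $\pi_0(\Crit(f\vert_{Z^t}))$ is finite as required.

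The substantive step is to verify hypothesis (4): for each $t\in H$ and each connected component $C$ of $\Crit(f\vert_{Z^t})$, that $K^0_H(C)$ has no additive torsion and $K^1_H(C)=0$. By assumption, $C$ is a single isolated $H$-orbit $H\cdot p$. Because $\eta$ is a regular value of $\Phi_H$, the action of $H$ on $Z$ (and hence on $Z^t \subseteq Z$) is locally free, so $\Gamma := \Stab_H(p)$ is a finite subgroup of $H$. Thus $C$ is $H$-equivariantly diffeomorphic to the homogeneous space $H/\Gamma$.

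To compute $K^*_H(H/\Gamma)$ I would invoke the standard induction isomorphism $K^*_H(H/\Gamma)\cong K^*_\Gamma(\pt)$ from \cite{Seg68}. Since $\Gamma$ is a finite abelian group, $K^0_\Gamma(\pt)\cong R(\Gamma)\cong \Z[\widehat{\Gamma}]$ is a free $\Z$-module of rank $|\Gamma|$, and $K^1_\Gamma(\pt)=0$. In particular $K^0_H(C)$ is torsion-free and $K^1_H(C)=0$, so condition (4) holds. Theorem~\ref{thm:main} then yields that $\okf(\mathfrak{X})$ is free of additive torsion.

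Finally, the additional claim that $K^1_H(Z^t)=0$ for every $t\in H$ is not an extra hypothesis of Theorem~\ref{thm:main} but rather a byproduct of its proof: indeed, the inductive argument building up through the critical components of $f$ simultaneously establishes both the torsion-freeness of $K^0_H(Z_\ell^+)$ and the vanishing of $K^1_H(Z_\ell^+)$, and this is precisely what the remark immediately following the proof of Theorem~\ref{thm:main} records. Thus the vanishing $K^1_H(Z^t)=0$ is obtained at no extra cost. There is no substantial obstacle here; the only non-mechanical ingredient is the computation $K^*_H(H/\Gamma)\cong K^*_\Gamma(\pt)$, which is classical.
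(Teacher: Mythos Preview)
Your proposal is correct and follows essentially the same approach as the paper: reduce to verifying hypothesis (4) of Theorem~\ref{thm:main}, compute $K^*_H(H/\Gamma)\cong K^*_\Gamma(\pt)\cong R(\Gamma)$ for the finite stabilizer $\Gamma$, and invoke the remark following the proof of the Main Theorem for the vanishing of $K^1_H(Z^t)$. The paper's proof is slightly terser (it does not spell out hypothesis (3) or cite \cite{Seg68} explicitly), but the argument is the same.
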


 \begin{proof}
It suffices to check that the hypotheses of Theorem~\ref{thm:main} are
satisfied, and it is evident that the only assumption needing comment
is 
(4).
Since each connected component $C$ is an isolated $H$-orbit, and by
assumption $H$ acts locally freely on $Z$, we have 
 \[
 K^0_{H}(C) \cong K^0_{H}(H \cdot p) \cong
 K^0_{H}(H/\Gamma),
 \]
where $p\in C$ and $\Gamma$ is the finite stabilizer subgroup $\Stab_T(p)$ in $H$. The $H$-equivariant $K$-theory of
 a homogeneous space is the representation ring of the stabilizer of the identity coset,
 \[
 K^0_{H}(H/\Gamma) \cong K^0_{\Gamma}(\pt) \cong R(\Gamma),
 \]
 which has no additive torsion. Moreover, $K^1_{H}(H/\Gamma) \cong K^1_{\Gamma}(\pt) = 0$.
Hence, assumptions (4a) and (4b) hold, and we may apply the Main
Theorem. The result follows. 
 \end{proof}

This corollary provides the starting point for inductive arguments
which show that the integral full orbifold $K$-theory of an abelian
symplectic quotient is torsion free.

\begin{remark}\label{remark:corollary}
It follows immediately from this proof
  that the integral full orbifold $K$-theory
  $\okf({\mathfrak X})$ of an orbifold ${\mathfrak X}= [ Z/H ]$
  satisfying the hypotheses of the Main Theorem
is additively the direct sum of representation rings  $R(\Gamma)$ for
 those
  subgroups $\Gamma$ of $H$ appearing as
  stabilizer groups in the level set of the moment map \(Z =
  \Phi_{H}^{-1}(\eta). \)  It would
  be interesting to compare this description via representation rings
  to the computation given in \cite{GolHarHolKim08} in terms of the
  Kirwan surjectivity theorem in full orbifold $K$-theory.
\end{remark}

\section{Symplectic toric orbifolds}\label{sec:toric}

We now provide a first application of the Main Theorem and its
corollary, namely:  for a large class of toric orbifolds,
the integral full orbifold $K$-theory contains no additive
torsion. 
In the case of weighted projective spaces 
similar results were obtained by Kawasaki in ordinary
integral cohomology in the 1970s \cite{Kaw73}, then in ordinary
$K$-theory (using results of \cite{Kaw73}) by Al Amrani in
\cite{Amr94a}. 
More recently, Zheng Hua  \cite{Hua09} has independently shown using
algebro-geometric methods that, when the generic point is stacky,
the Grothendieck group
$K_0({\mathfrak X}_\Sigma)$ of a smooth complete toric Deligne-Mumford
stack is a free $\Z$-module. Here, $\Sigma$ is a stacky fan as defined
in \cite{BCS05} and $K_0$ is the algebraic $K$-theory
defined via coherent sheaves. Since it is straightforward to see from
the definition (given below) of symplectic toric orbifolds ${\mathfrak
X}$ that the twisted sectors arising in the computation of the full
orbifold $K$-theory $\okf({\mathfrak X})$ are themselves stacks which
are symplectic toric orbifolds, the substantive statement (which is
the topological $K$-theory
analogue of Hua's result) is that each
twisted sector individually has $K$-theory free of additive torsion.
Hence Theorem~\ref{thm:toric} should be viewed as a straightforward integral full
orbifold $K$-theory
analogue, in the topological category and for symplectic toric
orbifolds ${\mathfrak X}$, of Hua's result  \cite{Hua09}. However, our methods
of proof, which use the equivariant Morse theory of symplectic
geometry developed in Sections 2 and 3, are significantly different
from those of  \cite{Hua09}.

We first establish notation for both the Delzant construction of toric varieties and the statement
of the theorem. In the smooth case, this construction may be found in \cite{Del88} (for an
accessible account, see \cite{CdS01}).  This construction is generalized to the orbifold case in
\cite{LT97}. Let $T^n = (S^1)^n$ be the standard compact $n$-torus, acting in the standard linear fashion on
$\C^n$ (via the embedding of $T^n$ into $U(n,\C)$ as diagonal matrices with unit complex
entries). This is a Hamiltonian $T^n$-action on $\C^n$ with respect to the standard K\"ahler
structure on $\C^n$. 

Let \(\Phi: \C^n \to (\t^n)^*\) denote a moment map for this action. For a
connected closed subtorus $\beta: H\hookrightarrow T^n$, let $\Phi_H:=\beta
\circ \Phi: \C^n\rightarrow \mathfrak{h}^*$ denote the induced moment
map. For a regular value $\eta\in
\mathfrak{h}^*$ of $\Phi_H$, let $Z:=\Phi_H^{-1}(\eta)$ be its level set.
By regularity of $\eta$, $H$ acts locally freely on
$Z$. The {\bf symplectic toric orbifold} specified by
$\beta: H \into T^n$ and $\eta$ is then
defined by
$$
\mathfrak{X} := \C^n/\!/_{\eta} H =[Z/H].
$$
The procedure just recounted is often called the {\bf Delzant construction} of the toric orbifold ${\mathfrak
X}$, although historically it was the underlying topological space of ${\mathfrak X}$ that was
studied, not the associated stack\footnote{Indeed, the underlying topological space \(Z/H\)
corresponding to the stack ${\mathfrak X}$ is often also called the {\bf symplectic quotient of
  $\C^n$ by $H$ at the value $\eta$}. In the current
  literature, there is an unfortunate ambiguity: 
  the ``symplectic quotient" may refer to the stack or the
  underlying topological space.
  }. 
Symplectic toric orbifolds were classified in \cite{LT97}; we
consider only those obtained by a quotient by a connected subtorus
$H$. We will call an element \(\xi \in \t\) of the Lie algebra {\bf
  generic} if its associated $1$-parameter subgroup $\exp(t\xi)$ in
$T$ is dense: in the notation of Section~\ref{sec:normalform}, $T^\xi
= T$. Note that if there exists any $\xi \in \t$ such that $\Phi^\xi
\vert_Z$ is proper and bounded below, then there also exists a generic
$\xi \in \t$ satisfying the same conditions.

\begin{theorem}\label{thm:toric}
Let ${\mathfrak X} = \C^n/\!/_{\eta} H$ be a symplectic toric
orbifold, where $\beta: H \into T$ a connected closed subtorus of $T$ and
$\eta \in \h^*$ a regular value.
Let $Z = \Phi^{-1}_H(\eta)$ denote the $\eta$-level set of $\Phi_H$.
Then $\okf({\mathfrak X})$
has no additive torsion. Furthermore, $K^1_H(Z^t)=0$ for all $t\in H$.
\end{theorem}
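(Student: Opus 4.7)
The plan is to apply Corollary~\ref{cor:iso} with $T = T^n$ acting diagonally on $\C^n$. Choose a generic $\xi \in \mathfrak{t}$, that is, one whose one-parameter subgroup is dense in $T$, so that $T^\xi = T$ and hypothesis~(1) ($H \subseteq T^\xi$) holds automatically. Hypothesis~(2), that $f := \Phi^\xi \vert_Z$ is proper and bounded below, follows from the compactness of $Z$, since symplectic toric orbifolds in the Lerman--Tolman sense~\cite{LT97} have compact underlying space.

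The substantive step is hypothesis~(3): for each $t \in H$, $\Crit(f \vert_{Z^t})$ is a finite union of isolated $H$-orbits. Since $t \in T^n$ acts diagonally, $(\C^n)^t$ is an $H$-invariant coordinate subspace $\C^{n_t}$, and $Z^t = (\Phi_H \vert_{\C^{n_t}})^{-1}(\eta)$ is a smooth submanifold on which $H$ acts locally freely. Applying Lemma~\ref{lemma:MorseBott} to the Hamiltonian $T$-space $\C^{n_t}$ shows that $p \in Z^t$ is critical for $f \vert_{Z^t}$ precisely when $\dim \Stab_T(p) = \codim(H,T) = n - \dim H$. Combined with local freeness of $H$ (which forces $\Stab_T(p) \cap H$ to be finite), this implies $\Stab_T(p) \cdot H = T$ up to connected components, so $T \cdot p$ and $H \cdot p$ agree as germs near $p$, and the connected component of $\Crit(f \vert_{Z^t})$ containing $p$ reduces to the single isolated $H$-orbit $H \cdot p$.

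Finiteness follows from the linear structure of the $T^n$-action: $\Stab_T(p)$ is the coordinate subtorus of $T^n$ cut out by the indices at which $p$ vanishes, so the critical $H$-orbits in $Z^t$ correspond bijectively to the finitely many size-$\dim H$ subsets $S \subseteq \{1,\dots,n_t\}$ for which the complementary coordinate subtorus $T'$ satisfies $T' \cdot H = T$ and for which $\Phi_H = \eta$ admits a solution supported on $S$. These isolated orbits are precisely the preimages of the vertices of the moment polytope of $[Z^t/H]$. Once hypothesis~(3) is verified, Corollary~\ref{cor:iso} yields both conclusions of the theorem. The main technical obstacle is this last coordinate-level finiteness/isolation verification, which relies essentially on the diagonal structure of the linear $T^n$-action on $\C^n$; the genericity of $\xi$ is what prevents spurious positive-dimensional critical components from appearing inside the various $Z^t$.
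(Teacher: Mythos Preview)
Your overall strategy---verify the hypotheses of Corollary~\ref{cor:iso}---matches the paper's. However, the isolation step in your second paragraph has a gap. You correctly deduce $\Stab_T(p)_0 \cdot H = T$ and hence $T\cdot p = H\cdot p$, but the inference ``therefore the connected component of $\Crit(f\vert_{Z^t})$ through $p$ is the single orbit $H\cdot p$'' does not follow from this orbit equality alone. In a general Hamiltonian $T$-space, if the isotropy group $T_0 = \Stab_T(p)_0$ acts on the symplectic slice $V$ with a nonzero fixed subspace $V_0$, then (by the local normal form used in Lemma~\ref{lemma:MorseBott}) the critical component through $p$ has dimension $\dim H + \dim V_0$ and strictly contains $H\cdot p$, even though $T\cdot p = H\cdot p$ still holds. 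What is special about the standard $T^n$-action on $\C^n$ is that the slice at $p$ is the coordinate subspace $\C^I$ (where $I$ indexes the vanishing coordinates of $p$), on which $T_0 = T_I$ acts with the nonzero weights $\{e_i\}_{i\in I}$; hence $V_0 = 0$. You must invoke this, or something equivalent.

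The paper closes the gap by a different and more direct route: having located $p$ in the $k$-dimensional coordinate subspace $\C^{I^c}$, it observes that regularity of $\eta$ forces $\{\beta^*(e_i): i\in I^c\}$ to be a rational basis of $\h^*$, so $\Phi_H\vert_{\C^{I^c}}$ is $\Q$-linearly equivalent to the standard moment map on $\C^k$, and the equation $\Phi_H = \eta$ pins down the norms $\lVert z_{i_j}\rVert^2$ uniquely. This simultaneously yields isolation and the per-subset uniqueness that your third-paragraph bijection asserts but does not justify. One smaller point: your properness argument assumes $Z$ is compact, which the bare quotient construction $\C^n/\!/_\eta H$ does not force; the paper instead chooses $\xi$ so that $\Phi^\xi$ is proper and bounded below on all of $\C^n$, and hence on the closed subset $Z$.
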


\begin{proof}
Since the original $T$-action on $\C^n$ is a standard linear action by diagonal matrices, for any
\(t \in H,\) the fixed point set $(\C^n)^t$ is a coordinate subspace,
i.e. $(\C^n)^t \cong \C^m \subset \C^n$, determined by the values of the $T$-weights on each coordinate line
$\{(0,0,\ldots, z_j, 0, \ldots, 0)\} \subseteq \C^n$. It is immediate that $(\C^n)^t$ is a linear
symplectic subspace of $\C^n$ and that the restriction \(\Phi_{H} \vert_{(\C^n)^t}: (\C^n)^t \to
\h^*\) is a moment map for this action. Thus \(Z^t\) is equal to\(\left( \Phi_{H} \vert_{(\C^n)^t}
\right)^{-1}(\eta),\) the level set of a moment map for a $H$-action on a
possibly-smaller-dimensional vector space.

Choose a generic \(\xi \in \t\) such that $\Phi^\xi \vert_Z$ is proper and
bounded below.  Such a $\xi$ exists because there are such components for
$\Phi:\C^n\to\mathfrak t^*$, and $Z$ is a $T$-invariant closed subset of $\C^n$.
Let \(f = \Phi^\xi \vert_Z.\) In order to apply Corollary~\ref{cor:iso},
we must check that for all 
\(t \in H,\) the critical set $\Crit(f \vert_{Z^t})$ consists of
finitely many isolated $H$-orbits. 
We first observe that since $(\C^n)^t \cong \C^m$ is itself a
symplectic linear space equipped with a linear $T$-action, it suffices
to prove this statement for the special case $t=\id$; the other cases
follow similarly.

Let $C$ be a connected component in $\Crit(f)$ and $p\in C$. Since $f$ is $T$-invariant, $H\cdot
p\subset C$. Since $C$ is compact and connected, it suffices to show that $C$ consists of one orbit
locally.
Recall from the proof of Lemma~\ref{lemma:MorseBott} that \(p \in \Crit(f)\) exactly if
\[\dim(\Stab_T(p)) = \codim(H) = n-k.\]
Thus \(\dim(\Stab_T(p)) = n-k\) exactly if $p = (z_1, z_2,\ldots, z_n) \in \C^n$ has precisely
$n-k$ coordinates equal to $0$, i.e.\ $p$ lies in a coordinate subspace of $\C^n$ isomorphic to
$\C^k$. Note that $H$ acts on $\C^n$ preserving this $\C^k$, and the regularity assumption on
$\eta$ implies that the restriction of $\Phi_{H}$ to $\C^k$ is $\Q$-linearly isomorphic to the
standard moment map for the standard $H$-action on $\C^k$ (up to a translation by a constant in
$\h^*$). In particular, this implies that the condition \(p \in Z:=\Phi_{H}^{-1}(\eta)\) for a
regular value $\eta$ uniquely determines the non-zero norms of the coordinates \(\|z_{i_1}\|^2,
\|z_{i_2}\|^2, \ldots, \|z_{i_k}\|^2.\) Therefore, the only nearby points $p'\in Z$ with
$\dim(\Stab_T(p')) = n-k$ are those in the $H$ orbit of $p$. We conclude that each connected
component $C$ of $\Crit(f)$ is a single $H$-orbit.  Moreover, there are only finitely many critical components 
because there are only finitely many $k$-dimensional coordinate
subspaces of $\C^n$. 

The same argument for each $(\C^n)^t$ and an application of
Corollary~\ref{cor:iso} 
completes the proof.
\end{proof}

\section{GKM spaces}\label{sec:GKM}

Let $(M,\omega,\Phi)$ be a compact Hamiltonian $T$-space. 
Suppose in addition that the $T$-fixed points are isolated, and that
the set of points with codimension-$1$ stabilizer 
\[
M_1 := \{x \in M \hsm \vert \hsm \codim(\Stab(x)) = 1\}
\]
has real dimension $\dim(M_1) \leq 2$. 
 When these conditions are satisfied, we say that 
$M$ is a {\bf GKM space} and that the
$T$-action on $M$
is {\bf GKM}.\footnote{There are many variants on the definition of
  GKM actions (see e.g. \cite{GZ01, GZ02, HarHol05, HHH05}).
  In particular, in less restrictive versions, the $T$-space $M$ need not be
  compact nor symplectic, nor even finite-dimensional.} 
It is also well-known in the theory of GKM spaces (in the context of
the study of Hamiltonian $T$-actions) that these conditions imply that
the {\bf equivariant $1$-skeleton of $M$}, i.e. the closure $\overline{M_1} = M_1 \cup M^T$, is a union of symplectic $2$-spheres
$S^2$. Moreover, each such $2$-sphere is itself a
Hamiltonian $T$-space; the $T$-action on $S^2$ is given by a
nontrivial character \(T \to S^1\) (equivalently, a nonzero weight \(\alpha \in
\t^*_{\Z}\)) where the $S^1$ acts on $S^2$ by rotation.
Here the weight $\alpha$ is obtained from the linear $T$-isotropy data at
either of the two
$T$-fixed points in $S^2$. (For details see e.g. the expository
article \cite{Tym05b}.) 

Hamiltonian $T$-spaces $(M,\omega,\Phi)$ (or
algebraic
varieties equipped with algebraic torus actions) for which the $T$-action is GKM have been
extensively studied in modern equivariant symplectic and algebraic
geometry, primarily due to the link provided by GKM theory between 
$T$-equivariant topology and the combinatorics of what is called the
{\bf moment graph} (or {\bf GKM graph}) of $M$. Many natural examples arise in the
realm of geometric representation theory and Schubert calculus, 
including generalized flag varieties $G/B$ and $G/P$ of
Kac-Moody groups $G$ (where $B$ is a Borel subgroup and, more
generally, $P$ a parabolic subgroup). Hence the orbifold invariants of
the orbifold symplectic quotients of GKM spaces is a natural area of study.

If the $T$-action is GKM, then for a large class of circle subgroups
of $T$, the associated orbifold symplectic quotients $M \mod_\eta S^1$
have no additive torsion in full integral orbifold $K$-theory, as we
now see.

\begin{theorem}\label{theorem:GKM}
Suppose that $(M,\omega,T,\Phi)$ is a compact Hamiltonian $T$-space,
and suppose further that the $T$-action is GKM. 
Suppose
that $\beta: S^1 \into T$ is a 
circle subgroup in $T$ such that $M^{S^1} = M^T$, and let
$\Phi_{S^1}:=\beta^* \circ \Phi: M\rightarrow \Lie(S^1)^*$ denote the induced moment map.  Let
$\eta\in \Lie(S^1)$ be a regular value of $\Phi_{S^1}$, and $\mathfrak{X}
=M/\!/_\eta S^1$ the orbifold symplectic quotient. Then $\okf(\mathfrak{X})$ is free of additive torsion.
\end{theorem}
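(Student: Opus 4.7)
The plan is to deduce the theorem from Corollary~\ref{cor:iso} applied with $H = S^1$. So I need to choose $\xi \in \t$ satisfying the three bulleted hypotheses of that corollary: $S^1 \subseteq T^\xi$, $f := \Phi^\xi|_Z$ proper and bounded below, and $\Crit(f|_{Z^t})$ a finite union of isolated $S^1$-orbits for every $t \in S^1$. I would pick $\xi$ generic, so that $T^\xi = T$ and the first condition is automatic; since $M$ is compact, $Z^t$ is compact as well, making properness and boundedness below automatic too. The entire content of the argument will be in the critical-point analysis.

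By the analysis in the proof of Lemma~\ref{lemma:MorseBott}, for generic $\xi$ a point $p \in Z^t$ lies in $\Crit(f|_{Z^t})$ precisely when $\dim \Stab_T(p) = \codim(S^1, T) = \dim T - 1$, i.e.\ exactly when $p$ lies in the codimension-one stabilizer stratum $M_1$. Hence $\Crit(f|_{Z^t}) = M_1 \cap Z^t$. The GKM hypothesis ensures that the equivariant one-skeleton $\overline{M_1} = M_1 \cup M^T$ is a finite union of $T$-invariant symplectic $2$-spheres $S$, on each of which the $T$-action factors through a nontrivial rotation by a weight $\alpha_S \in \t^*$. I would then use the hypothesis $M^{S^1} = M^T$ in two distinct ways: first, since the critical values of $\Phi_{S^1}$ on $M$ are precisely $\Phi_{S^1}(M^{S^1}) = \Phi_{S^1}(M^T)$, regularity of $\eta$ forces $Z \cap M^T = \emptyset$; second, if $\beta^*(\alpha_S) = 0$ for some sphere $S$, then $S^1$ would act trivially on $S$, placing the whole sphere inside $M^{S^1}$ and contradicting $M^{S^1} = M^T$ being isolated. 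Hence $\beta^*(\alpha_S) \neq 0$ for every $S$, and $\Phi_{S^1}|_S$ is a non-constant linear height function on $S \cong S^2$.

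Putting these together: for each sphere $S$ and each $t \in S^1$, either $t$ acts trivially on $S$ (so $S \subseteq M^t$ and $S \cap Z^t = S \cap Z$), or $S \cap M^t$ is contained in the two $T$-fixed poles of $S$ and hence in $M^T$, giving $S \cap Z^t \subseteq M^T \cap Z = \emptyset$. In the first case, $S \cap Z$ is the regular level set of a non-constant height function on $S^2$, hence either empty or a single circle, which by $S^1$-rotational invariance is a single $S^1$-orbit. Summing over the finitely many spheres of $\overline{M_1}$ exhibits $\Crit(f|_{Z^t})$ as a finite disjoint union of isolated $S^1$-orbits, verifying the last hypothesis of Corollary~\ref{cor:iso} and completing the proof.

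The main obstacle, more conceptual than technical, will be carefully aligning the abstract Morse--Bott critical-point identification of Lemma~\ref{lemma:MorseBott} with the concrete GKM combinatorics, and in particular tracking how the single hypothesis $M^{S^1} = M^T$ does double duty: ruling $T$-fixed points out of $Z$ on the one hand, and on the other ensuring that every GKM sphere is acted on non-trivially by $S^1$. Once both roles are in hand, the argument reduces to straightforward inspection of an $S^1$-rotation on $S^2$.
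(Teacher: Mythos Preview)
Your proof is correct and follows essentially the same approach as the paper's: apply Corollary~\ref{cor:iso} with a generic $\xi$, identify $\Crit(f|_{Z^t})$ with $Z^t \cap M_1$ via the stabilizer-dimension criterion from Lemma~\ref{lemma:MorseBott}, and then use the GKM sphere decomposition together with the hypothesis $M^{S^1}=M^T$ to see that each sphere contributes at most one $S^1$-orbit to the critical set. The only stylistic difference is in handling $t \neq \id$: the paper observes that $M^t$ is itself a GKM space and reduces to the case $t=\id$, whereas you argue directly by splitting into the cases where $t$ does or does not lie in the kernel of each sphere's weight.
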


\begin{proof} 
We show that the hypotheses of Corollary \ref{cor:iso} hold. Let $Z :=
\Phi_{S^1}^{-1}(\eta)$, choose $\xi
\in \t$ such that its $1$-parameter subgroup in $T$ is dense in $T$,
and let \(f := \Phi^\xi \vert_Z.\) 
Properness of $f$ is immediate since $M$ is compact. 
Hence it suffices to show that the critical sets $\Crit(f)$ and
$\Crit(f|_{Z^t})$ are isolated $S^1$-orbits.
Observe that 
when $M$ is a GKM space, $M^t$ is also a GKM space for any $t\in S^1$.
Hence it suffices to argue only for the case of $\Crit(f)$; the others follow
similarly.

By the argument given in the proof of Lemma~\ref{lemma:MorseBott}, $\Crit(f)$
consists precisely of those points \(p \in Z\) satisfying
\(\codim(\Stab_T(p)) = 1.\) In other words, \(\Crit(f) = Z \cap M_1.\) 
The closure $\overline{M_1}$ consists of a union of $2$-spheres, and the
$T$-action on each $S^2$ is specified by a non-zero weight \(\alpha
\in \t^*_{\Z}\) obtained from the $T$-isotropy decomposition at one of
the two fixed points of the $S^2$. By assumption on the circle
subgroup $S^1$, the kernel of the character $\phi_\alpha: T \to S^1$ specified by
$\alpha$ does not contain $S^1$. Therefore, $S^1$ acts nontrivially on each
$S^2 \subseteq\overline{M_1}$, implying $\Phi_{S^1}|_{S^2}$ is nontrivial,
and $\Phi_{S^1}^{-1}(\eta) \cap S^2$ consists of a single $S^1$-orbit.
(Note that $Z$ does not contain any $0$-dimensional orbits of $S^1$
since, by assumption on regularity of
$\eta$, $S^1$ acts locally freely on $Z$.) 

Thus the hypotheses of Corollary~\ref{cor:iso} are satisfied, so $\okf(\mathfrak{X})$ is additively torsion-free.

 \end{proof}

\begin{remark} 
We restrict to the case of compact symplectic manifolds in
this section for sake of brevity. 
However, the arguments given above could be altered to prove
analogous results in less-restrictive contexts of GKM theory (see e.g.
\cite{HarHol05}, \cite{HHH05}). 
\end{remark}

\begin{remark} It may be an interesting exercise to generalize
  Theorem~\ref{theorem:GKM} to symplectic quotients of GKM spaces by higher dimensional tori. 
 One approach would be to consider quotients of a {\bf $k$-independent
    GKM space} (cf. \cite{GZ01}) by a $(k-1)$-dimensional torus. 
\end{remark}

We now illustrate use of Theorem~\ref{theorem:GKM} 
for some coadjoint  orbits of low-rank Lie type. We will analyze examples derived from the
natural $G$-action on coadjoint orbits of $G$, but we must be careful
to avoid the possibility of non-effective actions (so the symplectic
quotient is an \textbf{effective} orbifold). Therefore, in
Examples~\ref{example:flagsC3}, \ref{example:B2}, and \ref{example:B3},
we use the quotient group $PG := G/Z(G)$ where $Z(G)$ denotes the
(finite) center of $G$; by slight abuse of notation, we also notate by
$T$ the image of the usual maximal torus under the quotient $G \to
PG$.

\begin{example}\label{example:flagsC3}
  Let \(M = {\mathcal O}_{\lambda} \cong {\mathcal F}\ell ags(\C^3)\)
  be a full coadjoint orbit of
  the Lie group $PSU(3,\C)$ with maximal torus $T$ given by
  the standard diagonal subgroup. Here \(\lambda \in \t^* \subseteq
  \su(3)^*\) and ${\mathcal O}_{\lambda}$ is the $\lambda$-orbit of
  $PSU(3)$ with respect to the usual coadjoint action. 
  Equip $M={\mathcal O}_{\lambda}$
  with the Kostant-Kirillov-Souriau form $\omega_\lambda$ and let
  $\Phi: {\mathcal O}_{\lambda} \to \t^*$ be the $T$-moment map
  obtained by composing the projection \(\pi: \su(3,\C)^* \to \t^*\)
  with the inclusion \({\mathcal O}_{\lambda} \into \su(3,\C)^*.\) It
  is well-known that the $T$-action on $M$ is GKM, and that the equivariant $1$-skeleton of ${\mathcal
    O}_{\lambda}$ maps under $\Phi$ to the GKM graph pictured in grey in
  Figure~\ref{fig:flagsC3level}.

\begin{figure}[ht]
\centering
{
\psfrag{x}{$\xi$}
\psfrag{d}{$\beta^*$}
\psfrag{L}{$Lie(S^1)^*$}
\includegraphics[width=3in]{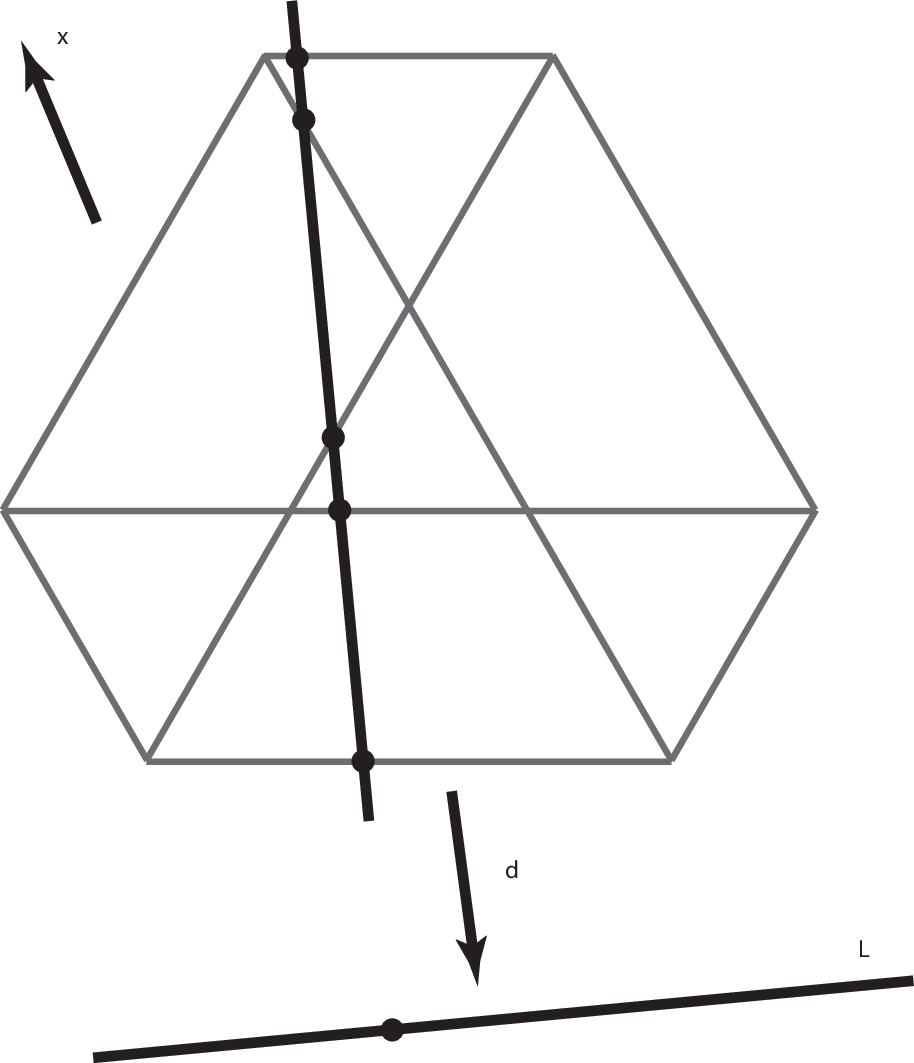}
}
\captionsetup{width=6in}
\caption{In grey, we indicate the image of the equivariant $1$-skeleton of $M$.  The $T$-fixed points correspond to the six (corner) vertices of the graph.  The black line intersecting the polytope represents the moment image of the level set $Z$ of an $S^1$-moment map $\Phi_{S^1}$. There
  are $5$ critical components $C_i$ in $\Crit(f)$, corresponding to
  the $5$ thick black dots (the images of the $C_i$ under $\Phi$).}
\label{fig:flagsC3level}
\end{figure}

For a choice of
$\beta: S^1 \into T$ such that ${\mathcal
  O}_{\lambda}^{S^1} = {\mathcal O}_{\lambda}^T$, the level set $Z$ of
the $S^1$-moment map 
$\Phi_{S^1} = 
\beta^* \circ \Phi$ is schematically indicated in Figure~\ref{fig:flagsC3level} by
the thick black line; the (images 
under $\Phi$
of the) components of $\Crit(f)$ for
a generic choice of $f = \Phi^\xi \vert_Z$ are
indicated by the thick black dots. 

The standard maximal-torus $T$-action on a coadjoint
orbit of a compact connected Lie group $G$ is GKM; hence we may apply
Theorem~\ref{theorem:GKM}. From Figure~\ref{fig:flagsC3level} we see
that, additively, $NK_{S^1}(Z)=\okf([Z/S^1])$ is a direct sum of representation rings
$R(\Gamma_{i,t})$, one for each critical component $C_{i,t}$ in
$\Crit(f \vert_{Z^t})$, as $t$ ranges in $S^1$. In fact, only
finitely many $t \in S^1$ will contribute nontrivial summands.
Here the subgroup $\Gamma_{i,t}$ of $S^1$ is the finite stabilizer group of
a point $p$ in $C_{i,t}$, which in turn may be computed in a
straightforward manner by analyzing the intersection of the chosen
$S^1$ with each of the stabilizer subgroups appearing in the
$T$-orbit stratification of ${\mathcal O}_{\lambda}$
(cf. \cite[Appendix B]{GGK}). 
\end{example}

\begin{example}\label{example:B2}
Now we consider the Lie type $B_2$. Here we find it convenient
to work with the 
complex form $PSO(5,\C)$. We recall that
the maximal torus $T$ of type $B_2$ is $2$-dimensional and the roots
are given as in Figure~\ref{fig:typeB2-roots}. 
We consider a coadjoint orbit $M = {\mathcal O}_{\lambda}$,
which may be identified with the homogeneous space $SO(5,\C)/P_{\alpha_1}$ where $P_{\alpha_1}$ is the
parabolic subgroup corresponding to the positive simple root
$\{\alpha_1\}$. More specifically, we may take 
${\mathcal O}_{\lambda}$ to be the 
coadjoint orbit through the element \(\lambda \in \t \cong
\t^*\) indicated in Figure~\ref{fig:typeB2-roots}.
The image of the equivariant $1$-skeleton for the Hamiltonian $T$-action on ${\mathcal O}_{\lambda}
\cong PSO(5,\C)/P_{\alpha_1}$ is depicted in
Figure~\ref{fig:typeB2-xray}.

\begin{figure}[ht]
\begin{minipage}[b]{0.45\linewidth}
\centering
{
\psfrag{A}{$\alpha_1$}
\psfrag{B}{$\alpha_2+2\alpha_1$}
\psfrag{C}{$\alpha_2+\alpha_1$}
\psfrag{D}{$\alpha_2$}
\psfrag{l}{$\lambda$}
\includegraphics[width=2in]{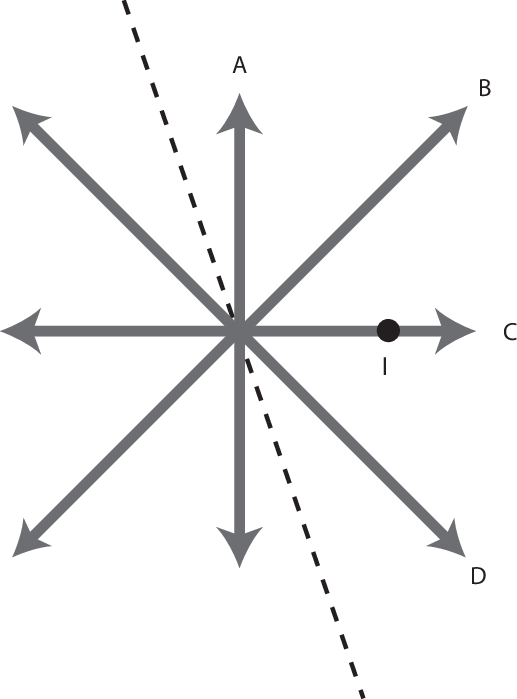}
}
\captionsetup{width=2.9in}
\caption{The type $B_2$ root system.
The dotted line is the hyperplane distinguishing 
  the positive from the negative roots. The element $\lambda$ lying on
the line spanned by $\alpha_1 + \alpha_2$ specifies the coadjoint orbit 
${\mathcal O}_{\lambda}$. }
\label{fig:typeB2-roots}
\end{minipage}
\hspace{0.5cm}
\begin{minipage}[b]{0.45\linewidth}
\centering
{
\psfrag{x}{$\xi$}
\psfrag{d}{$\beta^*$}
\psfrag{L}{$Lie(S^1)^*$}
\includegraphics[width=2.25in]{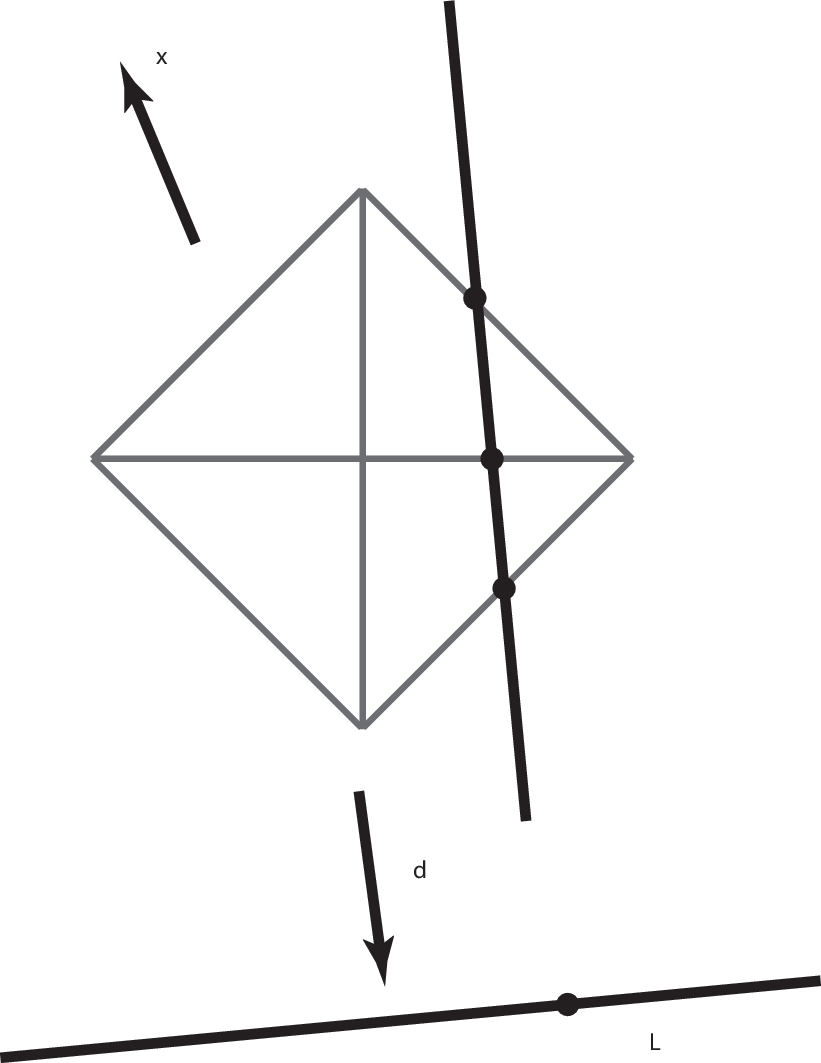}
}
\captionsetup{width=2.9in}
\caption{In grey, we indicate the image of the equivariant $1$-skeleton of $M = {\mathcal
    O}_{\lambda} \cong SO(5,\C)/P_{\alpha_1}$. The $T$-fixed points
  are the $4$ outer vertices. The black line intersecting the polytope represents the moment image of the level set $Z$ of an $S^1$-moment map $\Phi_{S^1}$. There
  are $3$ critical components $C_i$ in $\Crit(f)$, corresponding to
  the $3$ thick black dots (the images of the $C_i$ under $\Phi$).} 
\label{fig:typeB2-xray}
\end{minipage}
\end{figure}

Given $S^1\subset T$ with $M^{S^1}=M^T$ and corresponding moment map $\Phi_{S^1}$, the level set $\Phi_{S^1}^{-1}(\eta)$ indicated (under its image under $\Phi$) in the figure evidently lies entirely within an open Bruhat cell
of $M$. This Bruhat cell may be modelled on a single
linear $T$-representation with $T$-weights $-\alpha_2,
-\alpha_1-\alpha_2, -2\alpha_1-\alpha_2$, which renders the explicit
computation of the relevant finite stabilizer subgroups $\Gamma_{i,t}
\subseteq S^1$ particularly straightforward. This observation 
motivates the discussion in the next section. 

\end{example}

\section{Semilocally Delzant spaces}\label{sec:semilocal}

We have already seen in Sections~\ref{sec:toric} and~\ref{sec:GKM}
that the hypotheses of Corollary~\ref{cor:iso} are satisfied in several
situations familiar in equivariant symplectic geometry.  We will now
see that the methods of proof used thus far in this manuscript allow
us to make inductive use of the Main Theorem to cover more cases of 
orbifold symplectic quotients. Specifically, we observe that the 
proof of Theorem~\ref{thm:toric} shows that the $H$-equivariant
$K$-theory of the level set $Z$ arising from a Delzant construction
has the properties that $K^0_H(Z)$ is additive-torsion-free and
$K^1_H(Z)=0$. Therefore, for $(M,\omega,\Phi)$ a Hamiltonian $T$-space
and 
$\beta: H \into T$ a connected subtorus, if it can be shown that
each of the connected components of the critical sets appearing in
Theorem~\ref{thm:main} can be $H$-equivariantly identified with a
level set of a Delzant construction, then the hypotheses (3a) and (3b)
of Theorem~\ref{thm:main} would be satisfied, thus allowing us to apply
the Main Theorem to a wider class of symplectic quotients.

To this end, we make the following definition.

\begin{definition}\label{def:semilocal} 
  Let $(M, \omega, \Phi_H)$ be a Hamiltonian $H$-space with moment map
  $\Phi_H: M \to \h^*$. 
We will say that an $H$-invariant subset
  $C\subset M$ is {\bf semilocally Delzant
    with respect to $H$} if the following conditions are satisfied:
\begin{enumerate} 
\item  There exists a $2n$-dimensional $H$-invariant symplectic submanifold $N
  \subseteq M$, an $H$-invariant 
  open neighborhood $U \subseteq N$ of $C$, and a $H$-equivariant
  symplectomorphism 
\[
\psi: U \to V \subseteq \C^n
\]
for an open $H$-invariant subset $V \subseteq \C^n$, where $H$ acts
linearly on $\C^n$, with associated moment map $\Phi_{\C^n}: \C^n
\to \h^*$. 
\item Under the map \(\psi,\) the set $C$ is identified with a level set of
the induced $H$-moment map on $\C^n$. In other words, \(\psi(C) =
\Phi_{\C^n}^{-1}(\eta') \subseteq
  \C^n\) for some regular value $\eta' \in {\mathfrak h}^*$.
\item There exists $\xi \in \h$ such that $\Phi^\xi_{\C^n}
  \vert_{\psi(C)}$ is proper and bounded below. 
\end{enumerate}
\end{definition}

We take a moment to discuss
situations in equivariant symplectic geometry in which we may expect
the above definition to be applicable. 
Recall that the equivariant Darboux theorem states that, near an isolated
$H$-fixed point \(p \in M^H,\) there exists an open
neighborhood $U_p$ of $p$ which is $H$-equivariantly symplectomorphic
to an affine space $\C^n$ equipped with a linear $H$-action (here $p$
is identified with the origin $0$ of $\C^n$). Under some technical
assumptions (cf. \cite{GGK}) which are not very restrictive in practice, it is also possible to arrange 
the symplectomorphism such that the $H$-isotypic decomposition 
\[
\C^n \cong \bigoplus_{\alpha} \C_\alpha,
\]
where the sum is over weights $\alpha \in \h^*_{\Z}$ and $\C_\alpha$
denotes the subspace of $\C^n$ of weight $\alpha$, has the property
that the moment map $\Phi_{\C^n}$ associated to this $H$-action has a
component which is proper and bounded below. It is then evident that a
closed subset $C$ of $M$ which lies entirely inside such an
equivariant neighborhood $U_p \cong \C^n$ near $p \in M^T$, and which
may be identified with a level set of $\Phi_{\C^n}$ via the
equivariant Darboux theorem, is semilocally Delzant. Moreover, similar
statements could be made of subsets $C'$ of $M$ which lie entirely in
proper coordinate subspaces of $\C^n$ under the same equivariant
identification with $U_p \subseteq \C^n$. Informally, we may
say that $H$-invariant closed subsets which are ``near enough to an isolated fixed
point'' can be semilocally Delzant as described above. In particular,
this point of view leads
to concrete examples of symplectic quotient constructions 
(e.g. of Hamiltonian $H$-spaces with isolated fixed points, such as those where the $H$-action
is GKM) with critical sets $C$ satisfying
Definition~\ref{def:semilocal}. 

Indeed, we note that a concrete family of examples of Hamiltonian $T$-spaces with
well-known such equivariant neighborhoods are the flag varieties
(coadjoint orbits) $G/B$ and $G/P$ of compact connected Lie
groups. The maximal torus $T$ of the compact connected Lie group $G$
acts naturally on such homogeneous spaces, with fixed points corresponding to cosets
$W/W_P$. Moreover, $G/B$ (similarly $G/P$) has a
convenient open cover obtained by Weyl translates of the open Bruhat cell $Bw_0B/B$, where
$w_0$ is the longest word in the Weyl group.
By using \cite[Proposition~2.8]{KarTol} and some knowledge of the $T$-orbit stratification
of $G/B$, it is possible to identify a ``large" open subset $U$ of a Bruhat cell which provides such an 
equivariant Darboux neighborhood.  Moreover, the subset $U$ can be concretely described
in terms of moment map data.
If a closed subset $C$ of $G/B$ (similarly $G/P$)
may be seen to lie entirely within such a subset $U$, then the $T$-action near $C$ may be 
modelled by a linear
$T$-action on $\C^{\ell(w_0)}$ (here $\ell(w_0)$ denotes the Bruhat length
of $w_o$). We illustrate a concrete example of such a situation, using
a 
non-generic coadjoint orbit of Lie type $B_3$ in
Example~\ref{example:B3} below.

Returning to the discussion of orbifold $K$-theory, we first note that 
it is immediate from Theorem~\ref{thm:toric} that if $C$ is
semilocally Delzant, then $K^0_H(C)$ has no additive torsion and that
$K^1_H(C) = 0$. 
This leads to the following.

\begin{theorem}\label{theorem:semilocal}
Let $M$ be a Hamiltonian $T$ space, and let $H\subset T$ be a connected subtorus. Let $Z = \Phi^{-1}_H(\eta)\subset M$ be a level set of the $H$-moment map
$$
\Phi_H: M\rightarrow \mathfrak{h}^*
$$
and $\mathfrak{X} = [Z/H]$ be the orbifold obtained as a symplectic quotient $M/\!/H$.
Let $\xi\in \mathfrak{t}$ be such that $T^\xi = T$, and $f=\Phi^\xi
\vert_Z$ the corresponding moment map restricted to $Z$. 
Suppose that 
\begin{enumerate} 
\item $f$ is proper and bounded below,
\item for all \(t \in H,\) the set of
connected components $\pi_0(\Crit(f \vert_{Z^t}))$ is finite,
\item for all $t \in H$, each connected component $C$ of
 $\Crit(f|_{Z^t})$ is  semi-locally Delzant 
    with respect to $H$. 
\end{enumerate} 
Then $\okf({\mathfrak X})$ has no additive torsion.  Furthermore, $K^1_H(Z^t)=0$ for all $t\in H$.
\end{theorem}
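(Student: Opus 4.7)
The plan is to reduce the theorem to the Main Theorem~\ref{thm:main} by verifying its four hypotheses, and then to extract the stronger vanishing $K^1_H(Z^t) = 0$ from a reinspection of that theorem's proof. Hypothesis (1) is immediate since $T^\xi = T \supseteq H$, and hypotheses (2) and (3) are assumed in the present statement. The real content is hypothesis (4): for each $t \in H$ and each connected component $C$ of $\Crit(f|_{Z^t})$, one must show that $K^0_H(C)$ has no additive torsion and that $K^1_H(C) = 0$.

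To dispatch (4), I would fix such a $C$ and apply the semilocally Delzant hypothesis to produce an $H$-equivariant symplectomorphism $\psi: U \to V$ identifying $\psi(C)$ with a regular level set $\Phi_{\C^n}^{-1}(\eta')$ of a linear $H$-moment map on $\C^n$. Since a linear symplectic $H$-action on $\C^n$ can be diagonalized, $H$ embeds as a connected closed subtorus of the standard torus $T^n$ acting on $\C^n$, so $[\psi(C)/H]$ is an honest symplectic toric orbifold in the sense of Section~\ref{sec:toric}. Theorem~\ref{thm:toric} then gives that $K^0_H(\psi(C))$ is additive-torsion-free and $K^1_H(\psi(C)) = 0$, and since $\psi$ restricts to an $H$-equivariant diffeomorphism $C \to \psi(C)$, the same statements transport to $K^*_H(C)$. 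With hypothesis (4) in hand, Theorem~\ref{thm:main} yields that $\okf(\mathfrak{X})$ is free of additive torsion.

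For the assertion $K^1_H(Z^t) = 0$, I would not need a separate argument but rather revisit the induction inside the proof of Theorem~\ref{thm:main}: the periodic long exact sequence~\eqref{eq:LES} together with the equivariant Thom isomorphism propagates the vanishing $K^1_H(C_j) = 0$ through successive sublevel sets to yield $K^1_H(Z_j^+) = 0$ at every stage, a fact already recorded in the remark immediately following that proof. Running the same induction on each $Z^t$ in place of $Z$, using $K^1_H(C) = 0$ for every component $C$ of $\Crit(f|_{Z^t})$ established in the previous step, delivers the desired vanishing.

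The main (and really only nontrivial) obstacle lies in the middle step: checking that Theorem~\ref{thm:toric} genuinely applies to $[\psi(C)/H]$. One must confirm that the linear $H$-action on $\C^n$ from Definition~\ref{def:semilocal} fits the Delzant setup (via the diagonalization remark above), that $\eta'$ is a regular value (which is part of the definition), and that condition (3) of Definition~\ref{def:semilocal} supplies the proper, bounded-below component of the moment map that drives the proof of Theorem~\ref{thm:toric}. All of these are essentially built into the notion of semilocally Delzant, so the reduction is clean and the theorem follows by stringing together the Main Theorem and the toric case.
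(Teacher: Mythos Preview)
Your proposal is correct and follows essentially the same approach as the paper: verify hypothesis (4) of Theorem~\ref{thm:main} by invoking Theorem~\ref{thm:toric} on each semilocally Delzant component $C$, then apply the Main Theorem and extract $K^1_H(Z^t)=0$ from the remark following its proof. You spell out more carefully than the paper does why Theorem~\ref{thm:toric} applies (diagonalizing the linear $H$-action to embed $H$ into $T^n$), but the logical skeleton is identical.
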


\begin{proof}
 By Theorem~\ref{thm:toric}, $\okf([C/H])$ has no additive torsion
  for each connected component $C$ of $Crit(f|_{Z^t})$ for all $t \in
  H$. In particular, $K^0_H(C)$ has no torsion.  Since we also have
  $K_H^1(C)=0$ for all critical sets, we have satisfied the hypotheses
  of Theorem~\ref{thm:main}.  Hence 
 $\okf(\mathfrak{X})$ has no additive torsion.
\end{proof}

\begin{remark}\label{remark:Zsemilocal}
  We note that if the level set $Z$ itself is semilocally Delzant,
  then by transferring all analysis to the appropriate equivariant
  Darboux neighborhood \(U \subseteq \C^n\) and using the same
  argument as in Section~\ref{sec:toric}, we immediately see that for
  all \(t \in H,\) all connected components $C$ of $\Crit(f
  \vert_{Z^t})$ are semilocally Delzant with respect to $H$. Hence, in
  this case the hypothesis (3) above is automatically satisfied. 
\end{remark}

\begin{example}\label{example:B3}
We close with an example of a symplectic quotient of a type $B_3$
coadjoint orbit by a $2$-dimensional torus. Since the subtorus is
dimension $2$, Theorem~\ref{theorem:GKM} does not apply, but
we may
use Theorem~\ref{theorem:semilocal}. 
Recall that the complex form of the compact Lie group of type $B_3$ is
$PSO(7,\C)$. The maximal torus $T$ is $3$-dimensional, and the root
system is depicted in Figure~\ref{fig:typeB3-roots}. We denote the
associated moment map by $\Phi$.

\begin{figure}[ht]
\begin{minipage}[b]{0.45\linewidth}
\centering
{
\psfrag{1}{\small $\alpha_1=L_1-L_2$}
\psfrag{2}{\small $\alpha_2=L_3$}
\psfrag{3}{\small $\alpha_3=L_2-L_3$}
\psfrag{4}{\small $L_1=\alpha_1+\alpha_2+\alpha_3$}
\psfrag{5}{\small $L_2=\alpha_2+\alpha_3$}
\psfrag{6}{\small $L_1-L_3=\alpha_1+\alpha_3$}
\psfrag{7}{\small $L_1+L_2=$}
\psfrag{0}{\small $\alpha_1+2\alpha_2+2\alpha_3$}
\psfrag{8}{\small $L_1+L_3=\alpha_1+2\alpha_2+\alpha_3$}
\psfrag{9}{\small $L_2+L_3=2\alpha_2+\alpha_3$}
\includegraphics[width=2.25in]{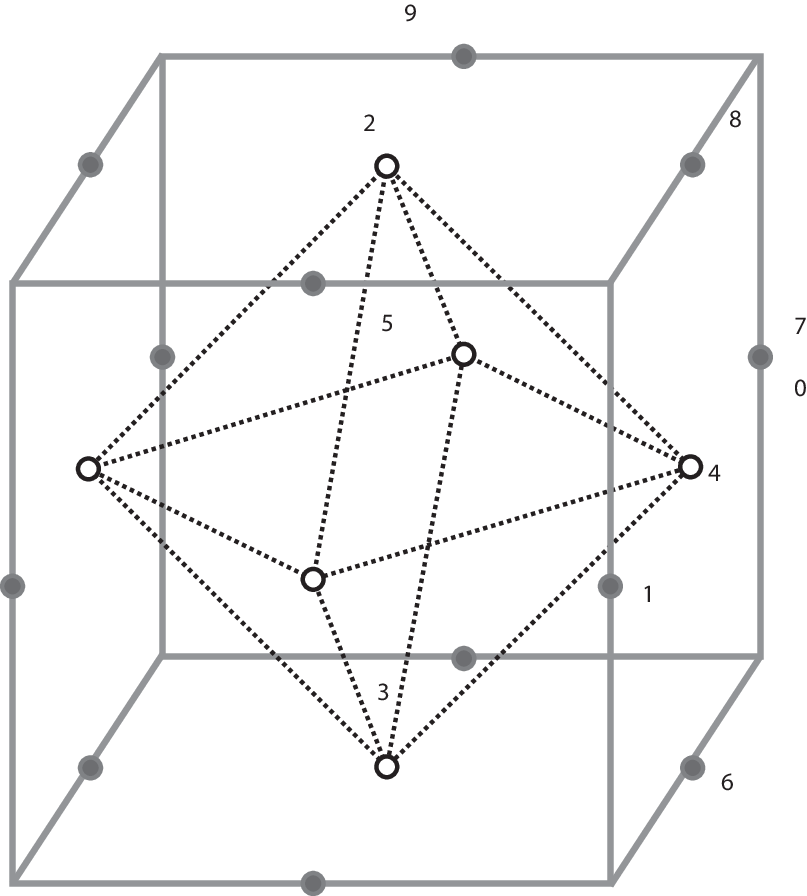}
}
\captionsetup{width=2.9in}
\caption{The root diagram for type $B_3$ with positive simple roots
  $\alpha_1, \alpha_2, \alpha_3$ (for details, see \cite[\S19.3]{FulHar91}). 
  The element \(\lambda \in \t^*\)
  lies on the positive span of the positive root $L_1 = \alpha_1 +
  \alpha_2 + \alpha_3$.} 
\label{fig:typeB3-roots}
\end{minipage}
\hspace{0.5cm}
\begin{minipage}[b]{0.45\linewidth}
\centering
{
\psfrag{A}{$p_1$}
\psfrag{B}{$p_2$}
\psfrag{C}{$p_3$}
\psfrag{p}{$\pi_{T'}$}
\psfrag{L}{$Lie(T')^*$}
\includegraphics[width=2.25in]{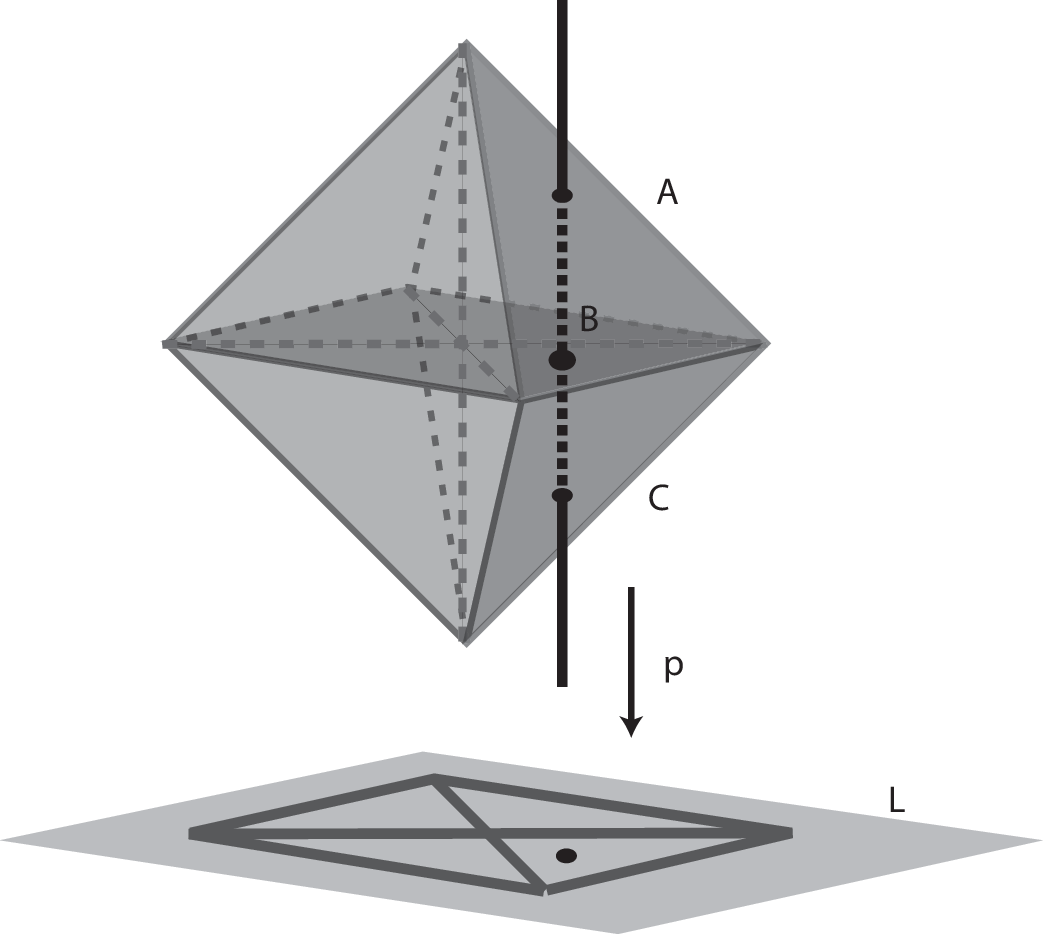}
}
\captionsetup{width=2.9in}
\caption{The GKM graph for $M = {\mathcal O}_{\lambda} \cong
  PSO(7,\C)/P_{\alpha_2, \alpha_3}$. The thick line and thick black
  dots schematically illustrate the (image under $\Phi$ of the)
  inverse images $Z := (\Phi_{T'})^{-1}(\eta)$ and the critical
  components of $\Crit(\Phi^\xi \vert_Z)$, respectively.}
\label{fig:typeB3-xray}
\end{minipage}
\end{figure}

We choose to work with a non-generic coadjoint orbit ${\mathcal
  O}_{\lambda}$ which may be identified with the 
complex homogeneous space $PSO(7,\C)/P_{\alpha_2, \alpha_3}$ where
$P_{\alpha_2, \alpha_3}$ is the parabolic
subgroup corresponding to the subset of the positive simple roots
$\{\alpha_2, \alpha_3\}$. 
We choose $\lambda$ lying on the positive span of the positive root $L_1 =
\alpha_1+\alpha_2+\alpha_3$ as in Figure~\ref{fig:typeB3-roots}.
The GKM graph of ${\mathcal O}_{\lambda}$
is also schematically shown. 
The image of the equivariant $1$-skeleton of
$M = {\mathcal O}_{\lambda}$ includes 
the three $2$-dimensional interior quadrilaterals given by the convex
hull of the roots $\{\pm L_1, \pm L_2\}, \{\pm L_2, \pm L_3\}, \{\pm
L_1, \pm L_3\}$ respectively. 

Let $T' \subset T$ be the $2$-dimensional connected subtorus of $T$ 
corresponding to the $2$-plane spanned by the roots $\{\pm L_1, \pm
L_2\}$, 
with corresponding projection
$\pi_{T'}: \t^* \to \Lie(T')^*$.  We wish to compute $\okf$ of the
symplectic quotient ${\mathcal O}_{\lambda} \mod T'$. 
The preimage \(\pi_{T'}^{-1}(\eta) \cap
\Delta\) in \(\Delta = \Phi({\mathcal O}_{\lambda})\) of a generic
regular value $\eta \in \Lie(T')^*$ is depicted as the
thick interval in Figure~\ref{fig:typeB3-xray}.

We wish now to show that 
the full orbifold $K$-theory of the quotient ${\mathcal O}_{\lambda}
\mod T'$ is free of additive torsion by
using Theorem~\ref{theorem:semilocal}. 
There are several ways to proceed. The first method, which depends on
Remark~\ref{remark:Zsemilocal}, is to observe that the full
level set $Z$ is semilocally Delzant. 
In this case, we may apply \cite[Proposition~2.8]{KarTol} to see that the
thick vertical line in Figure~\ref{fig:typeB3-xray} lies in an equivariant Darboux
neighborhood of the
$T$-fixed point $p$ corresponding to the root $L_1 = \alpha_1 +
\alpha_2 + \alpha_3$.  The $T$-action and corresponding moment map
$\Phi$ restricted to this neighborhood may be identified with that of a
linear $T$-action on $\C^5$ with weights $\{-L_1, -L_1 \pm L_2, -L_1
\pm L_3\}$ on the coordinates. The $T'$-action is the restriction of
this linear $T$-action, hence $Z$ is semilocally Delzant with respect
to $T'$. By Remark~\ref{remark:Zsemilocal} we may immediately apply
Theorem~\ref{theorem:semilocal}, as desired. 

In order to illustrate the concrete, straightforward nature of our
method of computation, for this example we 
also briefly
sketch the explicit analysis of each of the components of $\Crit(f)$ for
appropriate $f = \Phi^\xi \vert_Z$. Analysis of $\Crit(f
\vert_{Z^t})$, for $t \neq 1$,
would of course be similar. We begin by choosing $\xi$ generic such
that $\Crit(f)$ consists of the three components schematically
indicated in Figure~\ref{fig:typeB3-xray}. 

Observe that 
 the situations of the two exterior points $p_1, p_3$ in
 $\pi_{T'}^{-1}(\eta) \cap \Delta$
lying on the boundary $\partial \Delta$ are evidently symmetric,
 so it suffices to do the computations for only one of them. 
 We begin with the top exterior point $p_1$. 
A straightforward analysis of the linear $T$-action on the Bruhat cell
described above shows that $\Phi^{-1}(p_1) \subseteq {\mathcal
O}_{\lambda}$ consists of a single $T$-orbit diffeomorphic to $T'$. 
Moreover, the intersection of the stabilizer of the Bruhat cell with $T'$
is trivial, so $p_1$ corresponds to a free $T'$-orbit. Hence the
contribution to the full orbifold $K$-theory coming from $p_1$ is the
(ordinary) $K$-theory of a point, and is hence torsion-free.

We now proceed with the interior point $p_2$. 
(One way to view this computation is to recall that the horizontal quadrilateral
obtained as the convex hull of the roots $\{\pm L_1,
\pm L_2\}$ corresponds to a subvariety of
$PSO(7,\C)/P_{\alpha_2, \alpha_3}$ which may be identified with the homogeneous space of
$PSO(5,\C)$ of type $B_2$ studied in a previous example, although this
is not necessary for the computation.) 
Another straightforward analysis of linear $T$-actions, using the
explicit list of $T$-weights given above, 
yields that the corresponding symplectic quotient is the
``teardrop'' orbifold, 
i.e. the weighted projective space
$\P(1,2)$ (following notation of \cite{GolHarHolKim08}). Hence the
contribution to the full orbifold $K$-theory of ${\mathcal
  O}_{\lambda} \mod_\mu T'$ coming from the interior point $p_2$ is that associated
to $\P(1,2)$, which is explicitly computed in
\cite{GolHarHolKim08}, 
and has no additive torsion. 

\end{example}

\def\cprime{$'$}

\end{document}